\definecolor{gnred}{RGB}{255,91,89}
\definecolor{gnred1}{RGB}{71,0,0} 
\definecolor{gnred2}{RGB}{117,0,0} 
\definecolor{gnred3}{RGB}{164,0,0} 
\definecolor{gnred4}{RGB}{211,0,0} 
\definecolor{gnred5}{RGB}{255,0,0} 
\definecolor{gnred6}{RGB}{255,42,34} 
\definecolor{gnred7}{RGB}{255,91,89} 
\definecolor{gnblue1}{RGB}{0,36,71}   
\definecolor{gnblue2}{RGB}{0,60,118}  
\definecolor{gnblue3}{RGB}{0,85,164}
\definecolor{gnblue4}{RGB}{0,108,212}
\definecolor{gnblue4}{RGB}{0,108,212}
\definecolor{gnblue5}{RGB}{0,133,255}  
\definecolor{gnblue6}{RGB}{35,156,255} 
\definecolor{gnblue7}{RGB}{88,177,255} 
\definecolor{gnbrown1}{RGB}{71,27,0}  
\definecolor{gnbrown2}{RGB}{117,45,0} 
\definecolor{gnbrown3}{RGB}{164,62,0} 
\definecolor{gnbrown4}{RGB}{211,80,0} 
\definecolor{gnbrown5}{RGB}{255,97,0} 
\definecolor{gnbrown6}{RGB}{255,127,26} 
\definecolor{gnbrown7}{RGB}{255,155,86} 
\newcommand\Item[1][]{%
  \ifx\relax#1\relax  \item \else \item[#1] \fi
  \abovedisplayskip=0pt\abovedisplayshortskip=0pt~\vspace*{-\baselineskip}}
\newtheoremstyle{ieeeconf}
  {0pt}   
  {0pt}   
  {\normalfont}  
  {\parindent}       
  {\itshape} 
  {:}         
  { } 
  {\thmname{#1} \thmnumber{#2}\thmnote{ (#3)}} 
\renewenvironment{proof}[1][\proofname]{\par
  \pushQED{\qed}%
  \normalfont \topsep\z@
  \trivlist
  \item[\hskip2em
        \itshape
    #1\@addpunct{:}]\ignorespaces
}{%
  \popQED\endtrivlist\@endpefalse
}
\theoremstyle{ieeeconf}
\newtheorem{definition}{Definition}
\newtheorem{thm}{Theorem}[section]
\newtheorem{lem}[thm]{Lemma}
\newtheorem{pro}[thm]{Proposition}
\newtheorem{conj}[thm]{Conjecture}
\newtheorem{remark}[thm]{Remark}
\newmdtheoremenv{assumptionbox}{Assumption}
\newenvironment{rem}{\begin{remark}}{\qed\end{remark}}
\newcommand{\bd}{\begin{definition}} 
\newcommand{\ed}{\end{definition}} 
\newcommand{\bp}{\begin{pro}} 
\newcommand{\ep}{\end{pro}} 
\newcommand{\bt}{\begin{thm}} 
\newcommand{\et}{\end{thm}}
\newcommand{\blm}{\begin{lem}} 
\newcommand{\elm}{\end{lem}}
\newcommand{\Z}{\mathbb{Z}}
\newcommand{\R}{\mathbb{R}}
\newcommand{\mcC}{\mathcal{C}}
\newcommand{\mcX}{\mathcal{X}}
\newcommand{\mcY}{\mathcal{Y}}
\newcommand{\mcU}{\mathcal{U}}
\newcommand{\mcS}{\mathcal{S}}
\newcommand{\indicator}[1]{\mathbf{1}_{#1}}
\newcommand{\realextended}{\overline{\R}}
\newcommand{\bi}{\begin{itemize}} 
\newcommand{\ei}{\end{itemize}}
\newcommand{\beq}{\begin{equation}} 
\newcommand{\eeq}{\end{equation}} 
\newcommand{\abs}[1]{\left|#1\right|}
\newcommand{\prox}[1]{\mathrm{prox}_{#1}}
\newcommand{\proj}{\mathbb{P}}
\newcommand{\norm}[1]{\|#1\|}
\newcommand{\WP}[2]{\left\llbracket {#1}, {#2}\right\rrbracket}
\newcommand{\lognorm}[2]{\mu_{#1}(#2)}
\newcommand{\wlognorm}[3]{\mu_{#1,#2}\bigl(#3\bigr)}
\newcommand{\subscr}[2]{#1_{\textup{#2}}}
\newcommand{\supscr}[2]{#1^{\textup{#2}}}
\newcommand{\setdef}[2]{\{#1 \; | \; #2\}}
\newcommand{\map}[3]{#1\colon #2 \rightarrow #3}
\DeclareMathOperator*{\argmin}{argmin}
\newcommand{\ceil}[1]{\lceil #1 \rceil}
\newcommand{\Bigceil}[1]{\left\lceil #1 \right\rceil}
\newcommand{\ds}{\displaystyle}
\newcommand{\xstar}{x^{\star}}
\newcommand{\e}{\mathrm{e}}
\newcommand{\relu}{\operatorname{ReLU}}
\newcommand{\sat}[2]{\operatorname{sat}_{#1}({#2})}
\newcommand{\0}{\mbox{\fontencoding{U}\fontfamily{bbold}\selectfont0}}
\newcommand{\linexp}[1]{\operatorname{lin-exp}({#1})}
\newcommand{\satur}{\operatorname{sat}}
\newcommand{\odeflowtx}[2]{\phi_{#1}\bigl({#2}\bigr)}
\newcommand{\radius}{r}
\newcommand{\tstar}{t^{\star}}
\newcommand{\tld}{\subscr{t}{c}}
\newcommand{\prodeqnorm}{k}
\newcommand{\ce}{\subscr{c}{exp}}
\newcommand{\cl}{\subscr{c}{lin}}
\newcommand{\loc}{\textup{L}}
\newcommand{\glo}{\textup{G}}
\newcommand{\kab}{k_\alpha^\beta}
\newcommand{\kba}{k_\beta^\alpha}
\newcommand{\ratioab}{k_{\alpha,\beta}}
\newcommand{\ratiolg}{k_{\loc,\glo}}
\newcommand{\glbnorm}[1]{\|#1\|_{\glo}}
\newcommand{\ball}[2]{B_{#1}{\bigl(#2\bigr)}}
\newcommand{\gwlsc}{GW-LS-C\xspace}
\newcommand\oprocendsymbol{\hbox{$\triangle$}}
\newcommand\oprocend{\relax\ifmmode\else\unskip\hfill\fi\oprocendsymbol}
\title{On Weakly Contracting Dynamics for Convex Optimization}
\author{Veronica Centorrino$^a$, Alexander Davydov$^b$, Anand Gokhale$^b$,\\
Giovanni Russo$^c$ and Francesco Bullo$^b$
\thanks{$^a$Veronica Centorrino is with Scuola Superiore Meridionale, Italy. {\tt\small veronica.centorrino@unina.it.}}
\thanks{$^b$Alexander Davydov, Anand Gokhale, and Francesco Bullo are with the Center for Control, Dynamical 
Systems, and Computation, UC Santa Barbara, Santa Barbara, CA 93106 USA. {\tt\small 
davydov@ucsb.edu, anand\_gokhale@ucsb.edu, bullo@ucsb.edu}.}
\thanks{$^c$Giovanni Russo is with the Department of Information and Electric Engineering and Applied Mathematics, University of Salerno, Italy. {\tt\small giovarusso@unisa.it.}}
}
\date{}
\begin{document}
\pagestyle{plain}

\maketitle

\begin{abstract}
\normalsize
We analyze the convergence behavior of \emph{globally weakly} and \emph{locally strongly contracting} dynamics.
Such dynamics naturally arise in the context of convex optimization problems with a unique minimizer. 
We show that convergence to the equilibrium is \emph{linear-exponential}, in the sense that the distance between each solution and the equilibrium is upper bounded by a function that first decreases linearly and then exponentially.  
As we show, the linear-exponential dependency arises naturally in certain dynamics with saturations. 
Additionally, we provide a sufficient condition for local input-to-state stability. 
Finally, we illustrate our results on, and propose a conjecture for, continuous-time dynamical systems solving linear programs.\end{abstract}
\section{Introduction}\label{2024a-sec:introduction}
\textit{Problem description and motivation: }A paradigm that is becoming popular to analyze possibly time-varying optimization problems (OP) is to synthesize continuous-time dynamical systems that {\em converge} to an equilibrium that is also the optimal solution of the problem. A suitable tool to assess convergence is contraction theory~\cite{WL-JJES:98, FB:23-CTDS}. 
For OPs with strongly convex costs, the corresponding gradient dynamics, primal-dual dynamics (in the presence of constraints), or proximal gradient dynamics (for non-smooth costs) are strongly contracting, implying that trajectories exponentially converge to the equilibrium, which is also the optimal solution. In contrast, for OPs with only convex costs, the corresponding gradient, primal-dual, or proximal gradient dynamics are weakly contracting (or nonexpansive), and convergence depends on the existence of the minimizer.

In this context, we focus on convex OPs with a unique minimizer via continuous-time dynamical systems. These OPs lead to a class of continuous-time dynamical systems that are globally-weakly contracting in the state space and only locally-strongly contracting (\gwlsc). We characterize the convergence behavior of such dynamics, showing that this is \emph{linear-exponential}, and local input-to-state stability (ISS).
Finally, as an application, we consider linear programming (LP) to illustrate the effectiveness of our results.

\textit{Literature review: }
Studying optimization algorithms as continuous-time dynamical systems has been an active research area since~\cite{KJA-LH-HU:58}, with, e.g.,~\cite{DWT-JJH:84} being one of the first works to design neural networks for LPs. ~\cite{EKPC-SH-SHZ:99} proposed a neural network  based on non-differentiable penalty functions for solving LPs. Recent advancements in, e.g., online and dynamic feedback optimization~\cite{GB-JC-JIP-EDA:22} have renewed the interest in continuous-time dynamics for optimization.
Additionally, OPs have been related to dynamical systems via proximal gradients, and the corresponding continuous-time proximal gradient dynamics are studied in, e.g.,~\cite{NKD-SZK-MRJ:19, SHM-MRJ:21}. Proximal gradients dynamics and contraction theory have been exploited in~\cite{AD-VC-AG-GR-FB:23f},~\cite{VC-AG-AD-GR-FB:23a} for tackling problems with strongly convex and only convex costs, respectively.
In a broader context, there has been a growing interest in using strongly contracting dynamics to tackle OPs~\cite{HDN-TLV-KT-JJES:18,AD-VC-AG-GR-FB:23f}. This is mainly due to the fact that such dynamics enjoy highly ordered transient and asymptotic behaviors. 
Specifically, (i) initial conditions are exponentially forgotten and the distance between any two trajectories decays exponentially quickly~\cite{WL-JJES:98}, (ii) unique globally exponential stable equilibrium for time-invariant dynamics~\cite{WL-JJES:98} (iii) entrainment to periodic inputs~\cite{GR-MDB-EDS:10a} (iv) highly robust behavior, such as ISS~\cite{SX-GR-RHM:21}. The asymptotic behavior of weakly contracting dynamics is instead characterized in, e.g.,~\cite{SC:19} for monotone systems and in~\cite{SJ-PCV-FB:19q} for primal-dynamics with a locally stable equilibrium.

\textit{Contributions:} We analyze convergence of the class of \gwlsc dynamics, showing that this is {\em linear-exponential}, in the sense that the distance between each solution of the system and the equilibrium is upper bounded by a {\em linear-exponential function}, introduced in this letter. Through a novel technical result, we characterize the evolution of certain dynamics with saturation in terms of the linear-exponential function. This lemma is exploited for our convergence analysis, which is carried out considering two cases that require distinct mathematical approaches. First, we consider systems that are \gwlsc with respect to (w.r.t.) the same norm. Then, we consider the case where the dynamics is \gwlsc w.r.t. two different norms. Specifically, we give a convergence bound that, as discussed below, is sharper than the one in~\cite{VC-AG-AD-GR-FB:23a}. Additionally, we characterize local ISS for input-dependent dynamics that are \gwlsc w.r.t. the same norm. Finally, we show the effectiveness of our results by considering a continuous-time dynamics tackling LPs and propose a general conjecture. The code to replicate our numerical example is given at \url{https://shorturl.at/vGNY1}.

While the treatment in this paper is inspired by the results in~\cite[Section 5]{VC-AG-AD-GR-FB:23a} we extend the results of \cite{VC-AG-AD-GR-FB:23a} in several ways. First, the linear-exponential function that bounds the convergence behavior of GW-LS-C dynamics is introduced in this paper. We also show in a novel lemma the relationship of this function with a scalar saturated ODE. Second, when the dynamics is \gwlsc w.r.t. two different norms, the bound we give here is continuous and always sharper than the one given in~\cite{VC-AG-AD-GR-FB:23a}, see Remark~\ref{rem:comparison} for the details. Third, when the dynamics is \gwlsc w.r.t. the same norm, the technical lemma used to establish linear-exponential convergence is novel and requires a different mathematical treatment than that of~\cite{VC-AG-AD-GR-FB:23a}.
Finally, in this paper we characterize local ISS. This property was not considered in~\cite{VC-AG-AD-GR-FB:23a}.

\section{Mathematical Preliminaries}\label{2024-sec:math_preliminaries}
We denote by $\0_n \in \R^n$ the all-zeros vector of size $n$. Vector inequalities of the form $x \leq (\geq) y$ are entrywise. We let $I_n$ be the $n \times n$ identity matrix.
Given $A, B \in \R^{n\times n}$ symmetric, we write $A \preceq B$ (resp. $A \prec B$) if $B-A$ is positive semidefinite (resp. definite). We denote by $\subscr{\lambda}{max}(A)$ the maximum eigenvalue of $A$.
We say that $A$ is \emph{Hurwitz} if $\alpha(A) := \max \setdef{\operatorname{Re}(\lambda)}{\lambda \text{ eigenvalue of } A}<0$, where $\operatorname{Re}(\lambda)$ denotes the real part of $\lambda$.

\paragraph{Norms, Logarithmic Norms and Weak Pairings}
We let $\| \cdot \|$ denote both a norm on $\R^n$ and its corresponding induced matrix norm on $\R^{n \times n}$. Given $x \in \R^n$ and $\radius >0$, we let $\ball{p}{x,\radius}:= \setdef{z\in\R^n}{\norm{z-x}_p\leq\radius}$ be the \emph{ball of radius $\radius$ centered at $x$} computed with respect to the norm $p$.
Given two norms $\norm{\cdot}_{\alpha}$ and $\norm{\cdot}_{\beta}$ on $\R^n$ there exist positive \emph{equivalence coefficients} $k_\alpha^\beta$ and $k_\beta^\alpha$ satisfying $\norm{x}_{\alpha}\leq\kab\norm{x}_{\beta}$, $\norm{x}_{\beta}\leq\kba\norm{x}_{\alpha}$, $\forall x\in\R^n$. The \emph{equivalence ratio between $\norm{\cdot}_{\alpha}$ and $\norm{\cdot}_{\beta}$} is $\prodeqnorm_{\alpha,\beta} :=\kab\kba$, with $\kab$ and $\kba$ minimal equivalence coefficients.

\smallskip
Given $A \in \R^{n \times n}$ the \emph{logarithmic norm} (log-norm) induced by $\| \cdot \|$ is $$\mu(A) := \lim_{h\to 0^+} \frac{\norm{I_n{+}h A} -1}{h}.$$
For an $\ell_p$ norm, $p \in [1,\infty]$, and for an invertible $Q \in \R^{n\times n}$, the $Q$-weighted $\ell_p$ norm is $\norm{x}_{p,Q} := \norm{Q x}_p$. The corresponding log-norm is $\wlognorm{p}{Q}{A} =\lognorm{p}{Q A Q^{-1}}$.

We let $\WP{\cdot}{\cdot}$ denote a weak pairing on $\R^n$ compatible with the norm $\norm{\cdot}$.
We recall some of the main standing assumption on weak paring useful for our analysis.
\bd
A weak pairing $\WP{\cdot}{\cdot}$, compatible with the norm $\norm{\cdot}$, satisfies:
\begin{enumerate}
\item \label{prop:wp:sub-additivity}
\emph{sub-additivity of first argument:} $\WP{x+z}{y} \leq \WP{x}{y} + \WP{z}{y}$, \ for all $x,y,z \in \R^n$;
\item \label{prop:wp:curve_derivative}
\emph{curve norm derivative formula:} $\norm{y(t)}D^+ \norm{y(t)} = \WP{\dot y(t)}{y(t)}$, for every differentiable curve $\map{y}{]a, b[}{\R^n}$ and for almost every $t \in ]a,b]$;
\item\label{prop:wp:cauchy-schwartz}
\emph{Cauchy-Schwartz inequality:} $\abs{\WP{x}{y}} \leq \norm{x}\norm{y}$, \ for all $x, y \in \R^n$;
\item \label{prop:wp:lumers_eq}
\emph{Lumer’s equality:} $\ds \mu\bigl(A\bigr)= \sup_{z \in \R^n, z \neq \0_n} \frac{\WP{Az}{z}}{\WP{z}{z}}$, for every $A\in \R^{n\times n}$.
\end{enumerate}
\ed
We refer to~\cite{FB:23-CTDS} for a recent review of those tools.
\paragraph{Mathematical Operators}
Given two normed spaces $(\mcX, \|\cdot\|_{\mcX})$, $(\mcY,\|\cdot\|_{\mcY})$, a map $\map{T}{\mcX}{\mcY}$ is Lipschitz with constant $L \geq 0$ if $\|T(x_1)-T(x_2)\|_{\mcY} \leq L\|x_1-x_2\|_{\mcX}$, for all $x_1,x_2 \in \mcX$.
The \emph{upper-right Dini derivative} of a function $\map{f}{\R}{\R}$ is $D^+f := \limsup_{h\to 0^+} \bigl(f(t+h) - f(t)\bigr)/h$.
The \emph{ceiling function}, $\map{\ceil{ \ }}{\R}{\Z}$, is defined by $\ceil{x} = \min\setdef{y \in \Z}{x \leq y}$. Given $d > 0$, the \emph{saturation function}, $\map{\mathrm{sat}_d}{\R}{[-d,d]}$, is defined by $\sat{d}{x} = x$ if $\abs{x} \leq d$, $\sat{d}{x} = d$ if $x > d$, and $\sat{d}{x} = - d$ if $x < - d$.
Given a set $\mcC$, the function $\map{\iota_{\mcC}}{\R^n}{[0,+\infty]}$ is the \emph{zero-infinity indicator function on $\mcC$} and is defined by $\iota_{\mcC}(x) = 0$ if $x \in \mcC$ and $\iota_{\mcC}(x) = +\infty$ otherwise. 
The \emph{indicator function on $\mcC$}, $\map{\indicator{\mcC}}{\R}{\{0, 1\}}$, is defined by $\indicator{\mcC}(x) = 1$ if $x \in \mcC$ and $\indicator{\mcC}(x) = 0$ otherwise.
The function $\map{\relu}{\R}{\R_{\geq 0}}$, is defined by $\relu(x) = \max\{0, x\}$.

We recall the following~\cite{FHC:83, AD-AVP-FB:22q}:
\bt[Mean value theorem for locally Lipschitz function]
\label{thm:mean_value_thm}
Let $\mcC \subseteq \R^n$ be open and convex, $\map{f}{\mcC}{\R^m}$ locally Lipschitz. Then, for almost every $x,y \in \mcC$ it holds:
$$
f(x)-f(y)=\left(\int _{0}^{1}Df(y + s(x-y))ds\,\right)(x-y),
$$
where the integral of a matrix is to be understood component wise.
\et
Whenever it is clear from the context, we omit to specify the dependence of functions on time $t$.

\subsection{Proximal Operator}
Given $\map{g}{\R^n}{\realextended}:= [{-}\infty,+\infty]$, the \emph{epigraph} of $g$ is the set $\operatorname{epi}(g) = \setdef{(x,y) \in \R^{n+1}}{g(x) \leq y}$. The map $g$ is (i) \emph{convex} if its epigraph is a convex set, (ii) \emph{proper} if its value is never $-\infty$ and is finite somewhere, and (iii) \emph{closed} if it is proper and its epigraph is a closed set.

The \emph{proximal operator of $g$ with parameter $\gamma > 0$}, $\map{\prox{\gamma g}}{\R^n}{\R^n}$, is defined by
\beq
\prox{\gamma g}(x) = \argmin_{z \in \R^n} g(z) + \frac{1}{2\gamma}\|x - z\|_2^2,
\eeq
the associated \emph{Moreau envelope}, $\map{M_{\gamma g}}{\R^n}{\R}$, and its gradient are given by:
\begin{align}
M_{\gamma g}(x) &= g(\prox{\gamma g}(x)) + \frac{1}{2\gamma} \|x - \prox{\gamma g}(x)\|_2^2, \\
\nabla M_{\gamma g}(x) &= \frac{1}{\gamma}(x - \prox{\gamma g}(x)). \label{eq:Moreau-gradient}
\end{align}
The gradient of the Moreau envelope always exists and is Lipschitz on $(\R^n, \|\cdot\|_2)$ with constant $1/\gamma$.

Finally, we recall that given a convex set $\mcC$, the proximal operator of the zero-infinity indicator function on $\mcC$ is the Euclidean projection onto $\mcC$, that is $\ds \prox{\gamma \iota_{\mcC}}(x) = \proj_{\mcC}(x) := \argmin_{z \in \mcC}\norm{x - z}_2 \in \mcC$.
\subsection{Contraction Theory for Dynamical Systems}
Consider a dynamical system 
\beq
\label{eq:dynamical_system}
\dot{x}(t) = f\bigl(t,x(t)\bigr),
\eeq 
where $\map{f}{\R_{\geq 0} \times \mcC}{\R^n}$, is a smooth nonlinear function with $\mcC\subseteq \R^n$ forward invariant set for the dynamics. We let $t \mapsto \odeflowtx{t}{x_0}$ be the flow map of~\eqref{eq:dynamical_system} at time $t$ starting from initial condition $x(0):= x_0$.
Then, we give the following:
\bd[Contracting systems] \label{def:contracting_system}
Given a norm $\norm{\cdot}$ with associated log-norm $\mu$, a smooth function $\map{f}{\R_{\geq 0} \times \mcC}{\R^n}$, with $\mcC \subseteq \R^n$ $f$-invariant, open and convex, and a constant $c >0$ ($c = 0)$ referred as \emph{contraction rate}, $f$ is $c$-strongly (weakly) infinitesimally contracting on $\mcC$ if
\beq\label{cond:contraction_log_norm}
\mu\bigl(Df(t, x)\bigr) \leq -c,  \textup{ for all } x \in \mcC  \textup{ and } t\in \R_{\geq0},
\eeq
where $Df(t,x) := \partial f(t,x)/\partial x$.
\ed
\smallskip
If $f$ is contracting, then for any two trajectories $x(\cdot)$ and $y(\cdot)$ of~\eqref{eq:dynamical_system} it holds that
$$\|\odeflowtx{t}{x_0} - \phi_t(y_0)\| \leq \e^{-ct}\|x_0 -y_0\|, \quad \textup{ for all } t \geq 0,$$
i.e., the distance between the two trajectories converges exponentially with rate $c$ if $f$ is $c$-strongly infinitesimally contracting, and never increases if $f$ is weakly infinitesimally contracting.

In~\cite[Theorem 16]{AD-AVP-FB:22q} condition~\eqref{cond:contraction_log_norm} is generalized for locally Lipschitz function, for which, by Rademacher’s theorem, the Jacobian exists almost everywhere (a.e.) in $\mcC$. Specifically, if $f$ is locally Lipschitz, then  $f$ is infinitesimally contracting on $\mcC$ if condition~\eqref{cond:contraction_log_norm} holds for almost every $x \in \mcC$ and $t\in \R_{\geq0}$.

\section{Linear-Exponential Decay of Globally-Weakly and Locally-Strongly Contracting Systems}
\label{2024a-sec:lin_exp_convergence}
In this section, we conduct a comprehensive analysis of the convergence of \gwlsc systems. First, we define the \emph{linear-exponential function}, which plays a pivotal role in bounding the convergence behavior of such dynamics.
\bd[Linear-exponential function]
\label{def:lin_exp_function}
Given a \emph{linear decay rate} $\cl>0$, an \emph{intercept} $q>0$, an \emph{exponential decay rate} $\ce>0$, and a \emph{linear-exponential crossing time} $\tld < {q}/{\cl}$, the \emph{linear-exponential function} $\map{\linexp{\cdot}}{\R_{\geq 0}}{\R_{\geq 0}}$ is defined by
\beq
\label{eq:lin_exp_function}
\linexp{t} =
\begin{cases}
q - \cl t  \ & \textup{ if } t \leq \tld,\\
\bigl(q - \cl \tld\bigr) \e^{ - \ce (t - \tld)}\  & \textup{ if }  t > \tld.\\
\end{cases}
\eeq
We write $\linexp{t \; ;\; q, \cl, \ce, \tld}$ when we want to highlight the parameters in~\eqref{eq:lin_exp_function}. See Figure~\ref{fig:lin_exp} for an illustration of~\eqref{def:lin_exp_function} for some parameters.
\ed
\begin{SCfigure}[5][!ht]
\centering
\includegraphics[width=.63\linewidth]{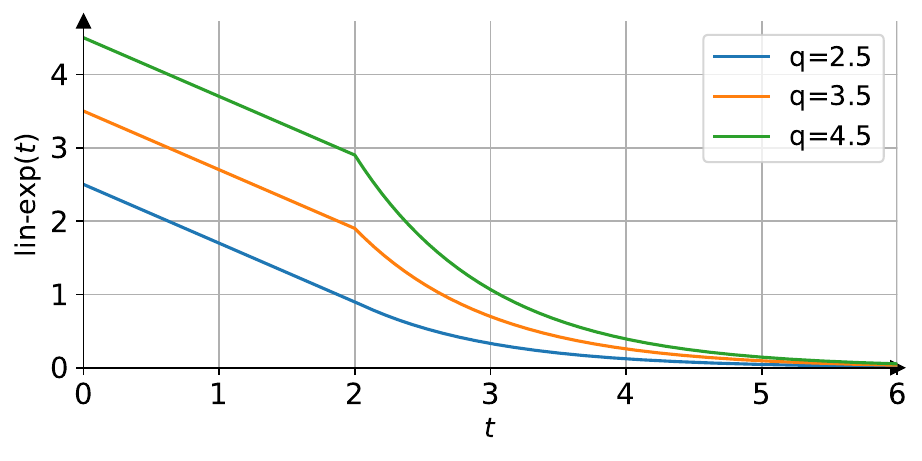}
\caption{Plot of the linear-exponential function~\eqref{eq:lin_exp_function} with linear decay rate $\cl = 0.8$, intercepts $q = \{2.5,\, 3.5,\, 4.5\}$, exponential decay rate $\ce = 1$, and linear-exponential crossing time $\tld = 2$.}
\label{fig:lin_exp}
\end{SCfigure}

Before giving the main convergence results of this section, we prove the following:
\begin{lem}[Property of the linear-exponential function]
\label{lem:linear-decay}
Let $\ce$ and $d$ be positive scalars. Consider the dynamics
\beq
\label{eq:LED:ode}
\dot x(t) =  - \ce \sat{d}{x(t)}, \quad x_0 = q > d.
\eeq
Then, $x(t) = \linexp{t \; ;\; q, \cl, \ce, \tld}$, with $\cl = d \ce$ and $\tld := q\cl^{-1} - \ce^{-1}>0$, is a solution of \eqref{eq:LED:ode}.
\end{lem}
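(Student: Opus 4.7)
The plan is to verify directly that the candidate curve $t \mapsto \linexp{t \; ;\; q, \cl, \ce, \tld}$ satisfies the initial value problem~\eqref{eq:LED:ode}, and then invoke uniqueness. Since $\sat{d}{\cdot}$ is globally Lipschitz with constant $1$, the vector field $-\ce \sat{d}{\cdot}$ is Lipschitz on $\R$, so the Cauchy problem admits a unique (Carathéodory) solution defined for all $t \geq 0$. It therefore suffices to check that the proposed piecewise curve satisfies the ODE almost everywhere and matches the initial condition.

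First, on the linear segment $t \in [0, \tld)$, the candidate is $x(t) = q - \cl t$. Using $\cl = d\ce$ one computes $\tld = q/\cl - 1/\ce = (q-d)/\cl$, which is strictly positive because $q > d$. Hence $x(t) > d$ on $[0,\tld)$, so $\sat{d}{x(t)} = d$, and the right-hand side of~\eqref{eq:LED:ode} equals $-\ce d = -\cl$, which matches $\dot x(t)$. The initial condition $x(0) = q$ is also met.

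Second, on the exponential segment $t > \tld$, the candidate reduces to $x(t) = (q - \cl\tld)\e^{-\ce(t-\tld)} = d\,\e^{-\ce(t-\tld)}$, which is strictly positive and bounded above by $d$; thus $\sat{d}{x(t)} = x(t)$. Differentiating, $\dot x(t) = -\ce d\,\e^{-\ce(t-\tld)} = -\ce x(t) = -\ce \sat{d}{x(t)}$, so the ODE holds. Continuity at $\tld$ is immediate since both pieces evaluate to $d$ there.

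The only mild obstacle is the kink in $\dot x$ at $t = \tld$: the left and right one-sided derivatives of the candidate do not agree, reflecting the non-smoothness of $\sat{d}{\cdot}$ at the breakpoint. This is handled by interpreting solutions in the Carathéodory sense, which is the appropriate notion for Lipschitz but non-smooth right-hand sides; the ODE then holds for every $t \neq \tld$, i.e., almost everywhere, and the initial condition is met. Uniqueness of the Carathéodory solution identifies the candidate with the true solution of~\eqref{eq:LED:ode}, completing the proof.
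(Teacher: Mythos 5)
Your proof is correct and follows essentially the same route as the paper's: both exploit the piecewise form of the saturated vector field together with Lipschitz uniqueness, the only difference being that the paper integrates forward through the regimes while you verify the candidate and appeal to uniqueness, which amounts to the same computation. One small slip: the one-sided derivatives at $t=\tld$ actually \emph{do} agree, since $q - \cl\tld = d$ gives a left derivative $-\cl = -d\ce$ and a right derivative $-\ce d = -\cl$, so the candidate is $C^1$ at the breakpoint and no Carath\'eodory interpretation is needed --- this only simplifies your argument rather than invalidating it.
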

\begin{proof}
First, we note that being the right end side of~\eqref{eq:LED:ode} locally Lipschitz continuous, the ODE~\eqref{eq:LED:ode} admits a unique continuous solution at least within a certain neighborhood of the initial condition.

Using the definition of saturation function, for all $t \in \R_{\geq 0}$ we can write the ODE~\eqref{eq:LED:ode} as
\beq
\label{eq:LED:ode_proof}
\dot x(t)  =
\begin{cases}
d \ce & \textup{ if } x(t) < -d,\\
- \ce x(t) & \textup{ if } x(t) \in [-d, d],\\
- d \ce & \textup{ if } x(t) > d,\\
\end{cases}
\eeq
which, in each interval $[t_0, t_1] \subseteq \R_{\geq 0}$ where the solution is continuous and does not change regime, has general solution
\beq
\label{eq:LED:ode_proof_sol}
x(t) =
\begin{cases}
d \ce t + x(t_0) & \textup{ if } x(t) < -d,\\
x(t_0)e^{- \ce  (t - t_0)} & \textup{ if } x(t) \in [-d, d],\\
- d \ce t + x(t_0) & \textup{ if } x(t) > d.\\
\end{cases}
\eeq
At time $t = 0$, we have $x_0 = q >d$. For continuity of the solution, there exists $\tstar$ such that $x(t) > d$ for all $t \in [0,\tstar]$. Thus from~\eqref{eq:LED:ode_proof_sol} and being $x(t_0 = 0) = q$, it is $x(t) = - d \ce t + q $, for all $t \in [0,\tstar]$. Moreover being $x(t)$ a decreasing function, the time value $\tstar$ is finite and there exists a time, say it $\bar t$ such that $x(\bar t) = d$. Let $\cl : = d \ce$, we have
\[
x(\bar t) = d \iff  - d \ce \bar t + q = d  \iff \bar t = q\cl^{-1} - \ce^{-1} := \tld.
\]

In summary, we have shown that the solution of~\eqref{eq:LED:ode_proof} is equal to $x(t) = q - \cl t $ for all $t \in [0,\tld]$ and is equal to $d$ at time $\tld$. Therefore from~\eqref{eq:LED:ode_proof_sol} and being $x(\tld) = q - \cl \tld$, for all time $t > \tld$ we have $x(t) = \bigl(q - \cl \tld\bigr) e^{- \ce  (t - \tld)}$. Specifically, $x(t) >0$ for all $t > \tld$, thus it can never be the case $x(t) < -d$. This concludes the proof.
\end{proof}

\smallskip
Next, we study the convergence behavior of \gwlsc systems of the form of~\eqref{eq:dynamical_system}, where the function $\map{f}{\R_{\geq 0} \times \mcC}{\R^n}$ is locally Lipschitz and with $\mcC\subseteq \R^n$ being $f$-invariant, open and convex. In what follows, we make the following:
\begin{assumptionbox}
\label{ass:block_assumption}
There exist $\|\cdot\|_\glo, \|\cdot\|_\loc$ on $\R^n$ such that
\begin{enumerate}[label=\textup{($A$\arabic*)}, leftmargin=1.2 cm,noitemsep]
\item \label{ass:global-weak_contractivity}
$f$ is weakly infinitesimally contracting on $\R^n$ w.r.t. $\norm{\cdot}_\glo$;
\item \label{ass:local-strong_contractivity}
$f$ is $\ce$-strongly infinitesimally contracting on a forward-invariant set $\mcS$ w.r.t. $\norm{\cdot}_\loc$;
\item \label{ass:eq_point}
$\xstar \in \mcS$ is an equilibrium point, i.e., $f(t,\xstar) = \0_n$, for all $t \geq 0$.
\end{enumerate}
\end{assumptionbox}

\begin{rem}
Assumptions~\ref{ass:local-strong_contractivity},~\ref{ass:eq_point} can be equivalently replaced by assuming the existence of a locally exponentially stable equilibrium.
\end{rem}
First, we consider \gwlsc systems with respect to the same norm. Then, dynamics that are \gwlsc with respect to different norms.
In both scenarios, we show that the convergence is (globally) \emph{linear-exponential}. That is, given a trajectory $x(t)$ of the dynamics, the distance $\norm{x(t) - \xstar}_\glo$ is upper bounded by a linear-exponential function~\eqref{eq:lin_exp_function}.

\subsection{Convergence of Globally-Weakly and Locally-Strongly Contracting Dynamics with Respect to the Same Norm}
We start by giving a bound on the upper right Dini derivative of the distance of any solution of~\eqref{eq:dynamical_system} with respect to the equilibrium $\xstar$.
\begin{lem}[Saturated error dynamics]
\label{lem:saturated_error_dynamics}
Consider system~\eqref{eq:dynamical_system} and let Assumptions~\ref{ass:global-weak_contractivity} --~\ref{ass:eq_point} hold with $\norm{\cdot}_\glo = \norm{\cdot}_\loc := \norm{\cdot}$. Let $\radius$ be the largest radius such that $\ball{}{\xstar, r} \subseteq \mcS$. Then, for every trajectory $x(t)$ starting from $x_0 \notin \mcS$, for almost every $t \geq 0$, we have
\begin{align}
\label{ineq:LED-weaklycontract+LES}
D^+ \norm{x(t)-\xstar}  &\leq -\ce \satur_\radius (\norm{x(t)-\xstar}).
\end{align}
\end{lem}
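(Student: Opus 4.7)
The plan is to express the Dini derivative of $\norm{x(t)-\xstar}$ through the weak pairing machinery, rewrite $\dot x(t)$ as $A(t)(x(t)-\xstar)$ using the mean value theorem with $A(t)$ an \emph{averaged Jacobian} along the segment from $\xstar$ to $x(t)$, and then bound $\mu(A(t))$ by splitting that segment into its portion inside $\mcS$ (where Assumption~\ref{ass:local-strong_contractivity} gives $\mu(Df)\leq -\ce$) and its portion outside $\mcS$ (where only Assumption~\ref{ass:global-weak_contractivity}, i.e.\ $\mu(Df)\leq 0$, applies). The saturation appears naturally as the length of the first sub-segment.

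Concretely, I would set $y(t):=x(t)-\xstar$, use $f(t,\xstar)=\0_n$ from Assumption~\ref{ass:eq_point}, and apply Theorem~\ref{thm:mean_value_thm} to write, for a.e.\ $t$,
\[
\dot y(t)= f(t,x(t))-f(t,\xstar)= A(t)\,y(t), \quad A(t):=\int_0^1 Df\bigl(t,\xstar+s\,y(t)\bigr)\,ds.
\]
The curve norm derivative formula (property~(ii) of the weak pairing) followed by Lumer's equality (property~(iv)) then yields
\[
\norm{y(t)}\,D^+\norm{y(t)} = \WP{A(t)y(t)}{y(t)} \leq \mu\bigl(A(t)\bigr)\,\norm{y(t)}^2,
\]
so, whenever $\norm{y(t)}>0$, $D^+\norm{y(t)}\leq \mu(A(t))\,\norm{y(t)}$. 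Next, by subadditivity and positive homogeneity of the log-norm (extended to integrals via a Riemann-sum approximation), $\mu(A(t))\leq \int_0^1 \mu\bigl(Df(t,\xstar+s\,y(t))\bigr)\,ds$. Because $\ball{}{\xstar,\radius}\subseteq\mcS$, the condition $s\leq \radius/\norm{y(t)}$ is sufficient for $\xstar+s\,y(t)\in\mcS$, so setting $\bar s:=\min\bigl(1,\radius/\norm{y(t)}\bigr)$ and combining the two contraction bounds gives $\mu(A(t))\leq -\ce\,\bar s$. Multiplying by $\norm{y(t)}$ produces exactly $-\ce\min(\norm{y(t)},\radius)=-\ce\,\sat{\radius}{\norm{y(t)}}$, i.e.\ inequality~\eqref{ineq:LED-weaklycontract+LES}.

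The principal obstacle is the hybrid character of the segment $[\xstar,x(t)]$ when $\norm{y(t)}>\radius$: part of it lies in $\mcS$ (strong contractivity available) and part of it outside (only weak contractivity available), so one cannot directly apply a uniform log-norm bound. The subadditivity/homogeneity of $\mu$ applied to the averaged Jacobian is precisely what reconciles the two regimes and produces the $\sat{\radius}{\cdot}$ structure on the right-hand side. The hypothesis $x_0\notin\mcS$ only serves to ensure that this non-trivial, mixed regime is actually reached; for trajectories already inside $\mcS$ the argument degenerates to the standard strong-contraction estimate $D^+\norm{y(t)}\leq -\ce\,\norm{y(t)}$, which is also consistent with~\eqref{ineq:LED-weaklycontract+LES}.
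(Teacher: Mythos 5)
Your proposal is correct and follows essentially the same route as the paper: both bound $D^+\norm{x(t)-\xstar}$ by the segment-averaged log-norm of the Jacobian and then split the segment at $\alpha^*=\radius/\norm{x(t)-\xstar}$, using $\mu(Df)\leq-\ce$ on the portion inside $\ball{}{\xstar,\radius}\subseteq\mcS$ and $\mu(Df)\leq 0$ outside, which yields exactly $-\ce\,\sat{\radius}{\norm{x(t)-\xstar}}$. The only difference is cosmetic: you rederive the averaged-log-norm inequality from the weak-pairing axioms (curve norm derivative formula, sub-additivity, Lumer's equality), whereas the paper simply cites it.
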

\begin{proof}
Consider an arbitrary trajectory $x(t)$ starting from $x_0 \notin \mcS$ and a second trajectory equal to the equilibrium $\xstar$. Let $\mu$ be the log-norm associated to $\norm{\cdot}$. For for almost every $t \geq 0$ it holds (\cite{FHC:83, AD-AVP-FB:22q}):
\begin{equation*}
D^+ \norm{x(t)-\xstar} \leq \int_0^1 \mu\Bigl(
D{f}\bigl(t,\xstar+\alpha(x(t)-\xstar)\bigr)\Bigr) d\alpha \,\cdot\, \norm{x(t)-\xstar} := RHS.
\end{equation*}
where $\alpha \in [0,1]$, and $\xstar+\alpha(x(t)-\xstar)$ is the segment from $\xstar$ to $x(t)$.

For each $t \geq 0$, if $\norm{x(t) - \xstar} \leq \radius$, then Assumption~\ref{ass:local-strong_contractivity} implies
\begin{equation*}
RHS \leq \int_0^1 (-\ce) d\alpha \,\cdot\, \norm{x(t)-\xstar} = - \ce \norm{x(t)-\xstar} = - \ce \sat{\radius}{\norm{x(t)-\xstar}},
\end{equation*}
where in the last equality we have used the definition of saturation function.
If $\norm{x(t)-\xstar}\geq\radius$, define $\alpha^*=\radius/\norm{x(t)-\xstar}$ and note that,  for almost every $t \geq 0$, Assumptions~\ref{ass:local-strong_contractivity} and~\ref{ass:global-weak_contractivity} imply
\begin{align*}
\alpha\leq\alpha^* \quad & \implies\quad \mu \bigl(D{f}(t,\xstar+\alpha(x(t)-\xstar))\bigr)\leq-\ce, \\
\alpha>\alpha^* \quad&\implies\quad \mu(D{f}(t,\xstar+\alpha(x(t)-\xstar))\bigr)\leq 0.
\end{align*}
Therefore, for almost every $t \geq 0$, it holds
\begin{align}
RHS &\leq \Bigl( \int_0^{\alpha^*} \!\!
\mu\bigl(D{f}(t,\xstar+\alpha(x(t)-\xstar))\bigr) d\alpha +
\int_{\alpha^*}^1 \!\!
\mu\bigl(D{f}\bigl(t,\xstar+\alpha(x(t)-\xstar))\bigr)d\alpha \Bigr) \,\cdot\,
\norm{x(t)-\xstar} \nonumber \\
&\leq (-\ce \alpha^* + 0 )\norm{x(t)-\xstar} = -\ce\radius = - \ce \sat{\radius}{\norm{x(t)-\xstar}},
\label{ineq:convergence-weak-new}
\end{align}
where in the last equality we used the definition of saturation function.
Figure~\ref{fig:average_lognorm_same_norms} provides an illustration of this result about the average of the log norm. This concludes the proof.
\begin{SCfigure}[7][!h]
\centering
\includegraphics[width=.56\linewidth]{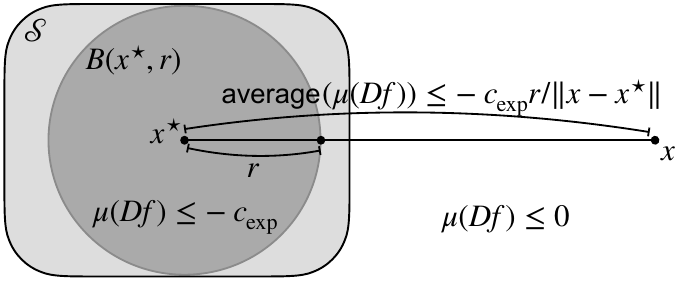}
\caption{Illustration of the inequality~\eqref{ineq:convergence-weak-new} with $\norm{\cdot} = \norm{\cdot}_{2}$.}
\label{fig:average_lognorm_same_norms}
\end{SCfigure}
\end{proof}
\smallskip
We can now give our convergence result for \gwlsc systems with respect to the same norm.

\smallskip
\bt[Linear-exponential convergence of \gwlsc systems w.r.t. the same norm]
\label{thm:local-exp-contractivity_same_norms}
Consider system~\eqref{eq:dynamical_system} and let Assumptions~\ref{ass:global-weak_contractivity} --~\ref{ass:eq_point} hold with $\norm{\cdot}_\glo = \norm{\cdot}_\loc := \norm{\cdot}$.
Also, let $\radius$ be the largest radius such that $\ball{}{\xstar, r} \subseteq \mcS$. For each trajectory $x(t)$ starting from $x_0$, it holds that
\begin{enumerate}
\item
\label{item:1_equal_norm}
if $x_0 \in \mcS$, then, for almost every $t \geq 0$,
\[
\norm{x(t) - \xstar} \leq \e^{-\ce t} \norm{x_0 - \xstar};
\]
\item
\label{item:2_equal_norm}
if $x_0 \notin \mcS$, then, for almost every $t \geq 0$,
\beq
\label{eq:bound_outside_the_ball}
\norm{x(t) - \xstar} \leq \linexp{t \; ;\; q, \cl, \ce, \tld},
\eeq
with
\bi
\item exponential decay rate $\ce > 0$;
\item linear decay rate $\cl = \ce\,\radius$;
\item intercept $\ds q = \norm{x_0-\xstar}$;
\item linear-exponential crossing time $\tld ={(q - \radius)}/{\cl}$.
\ei
\end{enumerate}
\et
\begin{proof}
Item~\ref{item:1_equal_norm} follows from Assumption~\ref{ass:local-strong_contractivity}.
Item~\ref{item:2_equal_norm} follows by using the Comparison Lemma~\cite[pp.~102-103]{HKK:02} to upper bound the solution to the differential inequality~\eqref{ineq:LED-weaklycontract+LES}.
Additionally, the upper bound obeys precisely the initial value~\eqref{eq:LED:ode} in Lemma~\ref{lem:linear-decay}, for parameter values $d=\radius$, $\cl = \ce \radius$, $q = \norm{x_0-\xstar}$, and $\tld ={(q - \radius)}/{\cl}$.
This concludes the proof.
\end{proof}

\subsection{Convergence of Globally-Weakly and Locally-Strongly Contracting Dynamics with Respect to Different Norms}
\label{2024a-sec:diff_norms}
We begin by introducing the \emph{$\rho$-contraction time}, where $0<\rho<1$.
\bd[$\rho$-contraction time]
\label{def:contraction_time}
Let system~\eqref{eq:dynamical_system} be strongly infinitesimally contracting with respect to a norm $\norm{\cdot}_{\alpha}$. Consider the \emph{contraction factor} $0<\rho<1$, a norm $\norm{\cdot}_{\beta}$, and a vector $x \in \R^n$.
\bi
\item The \emph{$\rho$-contraction time} is the time required for each trajectory starting in $\ball{\alpha}{x, r}$, for some $r>0$, to be inside $\ball{\alpha}{x, \rho r}$;
\item The \emph{$\rho$-contraction time with respect to $\norm{\cdot}_{\beta}$} is the time required for each trajectory starting in $\ball{\beta}{x, r}$, for some $r>0$, to be inside $\ball{\beta}{x, \rho r}$.
\ei
\ed
\smallskip
\begin{rem}
It is implicit in Definition~\ref{def:contraction_time} that the $\rho$-contraction time for a specific trajectory depends on the initial condition and the center of the ball.
\end{rem}
\smallskip
We can now give our convergence result for \gwlsc systems with respect to the different norms.
\bt[Linear-exponential convergence of \gwlsc systems]
\label{thm:lin_exp_decay_GWC_and_LSC_dynamics_diff_norms}
Let $\norm{\cdot}_{\loc}$ and $\norm{\cdot}_{\glo}$ be two norms on $\R^n$ with equivalence ratio $\prodeqnorm_{\loc,\glo}$. Consider system~\eqref{eq:dynamical_system} satisfying Assumptions~\ref{ass:global-weak_contractivity} -- \ref{ass:eq_point}.
Let $\radius$ be the largest radius such that $\ball{\glo}{\xstar, r} \subseteq \mcS$. For each trajectory $x(t)$ starting from $x_0$, it holds that
\begin{enumerate}
\item
if $x_0 \in \mcS$, then, for almost every $t \geq 0$,
\beq
\label{eq:bound_inside_the_ball}
\norm{x(t) - \xstar}_\glo \leq \prodeqnorm_{\loc, \glo}\e^{-\ce t} \norm{x_0 - \xstar}_\glo;
\eeq
\label{item:1_diff_norm}
\item if $x_0 \notin \mcS$, then for any \emph{contractor factor} $0 < \rho < 1$ and, for almost every $t \geq 0$,
\begin{align}
\label{eq:bound_outside_the_ball_diff_norms}
\norm{x(t) - \xstar}_{\glo}&\leq \linexp{t \; ;\; q, \cl, \ce, \tld},
\end{align}
with
\bi
\item exponential decay rate $\ce > 0$;
\item linear decay rate $\cl = \ce \radius (1-\rho)/\ln(\prodeqnorm_{\loc,\glo} \rho^{-1})$;
\item intercept $\ds q = \norm{x_0 - \xstar}_\glo + \radius(1 - \rho)\frac{\ln(\ratiolg)}{\ln(\ratiolg \rho^{-1})}$;
\item linear-exponential crossing time $\tld = \Bigceil{\frac{\glbnorm{x_0-\xstar} - \radius}{(1-\rho)\radius}}{\ln(\ratiolg \rho^{-1})}/{\ce} + \ln(\ratiolg) / \ce$.
\ei
\label{item:2_diff_norm}
\end{enumerate}
\et
\begin{proof}
Consider a trajectory $x(t)$ starting from initial condition $x_0$. If $x_0\in\mcS$, then item~\ref{item:1_diff_norm} follows from assumption~\ref{ass:local-strong_contractivity} and the equivalence of norms.

Indeed, Assumption~\ref{ass:local-strong_contractivity} implies that for every $x_0 \in \mcS$ and for almost every $t \geq 0$, it holds
\[
\norm{\odeflowtx{t}{x_0} - \xstar}_{\loc} \leq \e^{-\ce t} \norm{x_0 - \xstar}_{\loc}.
\]
Applying the equivalence of norms to the above inequality, we get
\begin{align}
\norm{\odeflowtx{t}{x_0} - \xstar}_{\glo} \leq \prodeqnorm_{\loc, \glo} \e^{-\ce t} \norm{x_0 - \xstar}_{\glo}.
\label{eq:condition_lsc_glo_diff_norms}
\end{align}

If $x_0\notin\mcS$, define the point $y_0 := \xstar + \radius \frac{x_0 - \xstar}{\|x_0 - \xstar\|_\glo} \in \partial \ball{\glo}{\xstar, \radius}$\footnote{Note that $\partial \ball{\glo}{\xstar, \radius}$ means the boundary of $\ball{\glo}{\xstar, \radius}$.}. The norm $\norm{y_0 - \xstar}_\glo = \radius$, therefore $y_0$ is a point on the boundary of $\ball{\glo}{\xstar, \radius}$. Moreover, the points $\xstar$, $y_0$, and $x_0$ lie on the same line segment, thus
\beq
\norm{x_0 - \xstar}_\glo = \norm{x_0 -y_0}_\glo + \radius.
\label{eq:aligned_points}
\eeq

By Lemma~\ref{lem:contraction_time}\ref{fact:contraction-time-mixed} and because each trajectory originating in $\ball{\glo}{\xstar, \radius}$ remains in $\mcS$, the $\rho$-contraction with respect to $\norm{\cdot}_\glo$ 
for the $\ce$-strongly contracting vector field $f$ is 
\begin{equation}
\label{eq:rho_time_proof}
t_\rho^{\loc,\glo}  = \frac{\ln(\ratiolg \rho^{-1})}{\ce}.
\end{equation}
Then, for almost every $t \in [0,t_\rho^{\loc,\glo}]$, we have
\begin{align}
    \|\odeflowtx{t}{x_0} - \xstar\|_\glo
    &\leq \|\odeflowtx{t}{x_0} - \phi_t(y_0)\|_\glo + \|\phi_t(y_0) - \xstar\|_\glo \label{eq:eq_proof_a} \\
    &\leq \|x_0 -y_0\|_\glo + \prodeqnorm_{\loc, \glo}\e^{-\ce t}\norm{y_0 - \xstar}_{\glo} \label{eq:eq_proof_b}\\
    &\overset{\eqref{eq:aligned_points}}{=} \|x_0 - \xstar\|_\glo - \|\xstar - y_0\|_\glo + \prodeqnorm_{\loc, \glo}\e^{-\ce t}\radius \nonumber\\
    &\!\!\! \overset{t = t_\rho^{\loc,\glo} }{\leq} \norm{x_0 - \xstar}_\glo - \radius(1 - \prodeqnorm_{\loc, \glo}\e^{-\ce{t_\rho^{\loc,\glo}}})\nonumber\\
    &\overset{\eqref{eq:rho_time_proof}}{=} \norm{x_0 - \xstar}_\glo - \radius(1 - \rho),\label{eq:decreasing1}
\end{align}
where in~\eqref{eq:eq_proof_a} we added and subtracted $\phi_t(y_0)$ and applied the triangle inequality, while inequality~\eqref{eq:eq_proof_b} follows from Assumption~\ref{ass:global-weak_contractivity} and inequality~\eqref{eq:condition_lsc_glo_diff_norms}.
Now,~\eqref{eq:decreasing1} implies that  $\norm{\odeflowtx{t_\rho^{\loc,\glo}}{x_0} - \xstar}_\glo \le \norm{x_0 - \xstar}_\glo - \radius(1 - \rho)$. If $\norm{x_0 - \xstar}_\glo - \radius(1 - \rho) \leq \radius$, then by Assumption~\ref{ass:local-strong_contractivity}, for almost every in $t\geq t_\rho^{\loc,\glo}$, we have
\begin{align*}
\norm{\odeflowtx{t}{x_0} - \xstar}_{\glo} & \leq \prodeqnorm_{\loc, \glo} \e^{-\ce(t - t_\rho^{\loc,\glo} )} \bigl(\norm{x_0 - \xstar}_\glo -  \radius(1 - \rho)\bigr).
\end{align*}
If $\norm{x_0 - \xstar}_\glo -  \radius(1 - \rho) > \radius$, we iterate the process. Specifically, let $x_\rho  := \phi_{t_\rho^{\loc,\glo} }(x_0)$, and define $\ds y_\rho := \xstar + \radius \frac{x_\rho - \xstar}{\|x_\rho  - \xstar\|_\glo} \in \partial \ball{\glo}{\xstar, \radius}$. Consider the solution to $\dot{y} = f(t, y)$ with initial condition $y({t_\rho^{\loc,\glo} }) = y_\rho$ and note that $\phi_t(x_\rho ) = \phi_{t+t_\rho^{\loc,\glo} }(x_0)$. For almost every $t \in [t_\rho^{\loc,\glo} , 2t_\rho^{\loc,\glo} ]$, we compute
\begin{align}
    \norm{\phi_{t+t_\rho^{\loc,\glo} }(x_0) - \xstar}_\glo &\leq \norm{\phi_t(x_\rho ) - \phi_t(y_\rho)}_\glo + \norm{\phi_t(y_\rho) - \xstar}_\glo \label{eq:eq_proof2_a} \\
    & \leq \|x_\rho  - y_\rho\|_\glo + \prodeqnorm_{\loc, \glo}\e^{-c (t - t_\rho^{\loc,\glo} )}\norm{y_0 - \xstar}_{\glo} \label{eq:eq_proof2_b}\\
    &\! \! \overset{\eqref{eq:aligned_points}}{=} \|x_\rho  - \xstar\|_\glo - \|\xstar - y_\rho\|_\glo + \prodeqnorm_{\loc, \glo}\e^{-c (t - t_\rho^{\loc,\glo} )}\radius \nonumber\\
    & \leq \norm{\phi_{t_\rho^{\loc,\glo} }(x_0) - \xstar}_\glo - \radius (1 - \prodeqnorm_{\loc, \glo}\e^{-c (t - t_\rho^{\loc,\glo} )})\nonumber\\
    &\! \! \overset{\eqref{eq:decreasing1}}{\leq} \norm{x_0 - \xstar}_\glo -  \radius(1 - \rho) - \radius (1 - \prodeqnorm_{\loc, \glo}\e^{-c (t - t_\rho^{\loc,\glo} )})\nonumber\\
    &\!\!\!\!\!\overset{t = 2t_\rho^{\loc,\glo} }{\leq}\norm{x_0 - \xstar}_\glo - 2 \radius(1 - \rho),\nonumber
\end{align}
where in~\eqref{eq:eq_proof2_a} we added and subtracted $\phi_t(y_0)$ and applied the triangle inequality, while~\eqref{eq:eq_proof_b} follows from Assumption~\ref{ass:global-weak_contractivity} and~\eqref{eq:condition_lsc_glo_diff_norms}.
We now reason as done in $[0, t_\rho^{\loc,\glo} ]$. If $\norm{x_0 - \xstar}_\glo - 2 \radius(1 - \rho) \leq \radius$, then Assumption~\ref{ass:local-strong_contractivity} implies
$$\norm{\phi_{t+t_\rho^{\loc,\glo} }(x_0) - \xstar}_{\glo} \leq \prodeqnorm_{\loc, \glo}\bigl(\norm{x_0 - \xstar}_\glo - 2 \radius(1 - \rho)\bigr)\e^{-c(t - 2t_\rho^{\loc,\glo} )}, \quad \forall t\geq 2t_\rho^{\loc,\glo} .$$
If $\norm{x_0 {-} \xstar}_\glo {-} 2 \radius(1 {-} \rho) > \radius$, we proceed analogously until $\norm{x_0 - \xstar}_\glo - T \radius(1 - \rho) \leq \radius$. This inequality is verified after at most $T := \Bigceil{\frac{\glbnorm{x_0-\xstar} - \radius}{(1-\rho)\radius}}$ steps. Iterating the previous process, at step $T$, for almost every $t \in [(T-1)t_\rho^{\loc,\glo} , Tt_\rho^{\loc,\glo} ]$, we get
\begin{align}
    \norm{\phi_{t+(T-1)t_\rho^{\loc,\glo} }(x_0) - \xstar}_\glo &\leq \norm{x_0 - \xstar}_\glo - (T-1) \radius(1 - \rho) - \radius (1 - \prodeqnorm_{\loc, \glo}\e^{-c (t - (T-1)t_\rho^{\loc,\glo} )}),\nonumber\\
    &\!\!\!\!\overset{t = kt_\rho^{\loc,\glo}}{\leq}\!\!\!\!\norm{x_0 - \xstar}_\glo - T \radius(1 - \rho) \leq \radius,\nonumber
\end{align}
where the last inequality follows from the definition of $T$.
Local strong contractivity then implies
$$\norm{\phi_{t+Tt_\rho^{\loc,\glo} }(x_0) - \xstar}_{\glo} \leq \prodeqnorm_{\loc, \glo}\bigl(\norm{x_0 - \xstar}_\glo - T \radius(1 - \rho)\bigr)\e^{-c(t - Tt_\rho^{\loc,\glo} )}, \quad \text{ for almost every } t\geq Tt_\rho^{\loc,\glo} .$$
The above reasoning together with Assumption~\ref{ass:global-weak_contractivity} implies that for almost every $t \in [i t_\rho^{\loc,\glo}, (i+1) t_\rho^{\loc,\glo} ]$, $i \in \{0,\dots, T-1\}$, we have
\begin{align}
\label{proof:ineq_key_passage}
\norm{\phi_{t + it_\rho^{\loc,\glo} }(x_0) - \xstar}_\glo \leq\min\Big\{\norm{x_0 - \xstar}_\glo - i  \radius(1 - \rho), \norm{x_0 - \xstar}_\glo - i  \radius(1 - \rho) - \radius\bigl(1-\prodeqnorm_{\loc, \glo}\e^{-c(t - i {t_\rho^{\loc,\glo} })}\bigr)\Big\}.
\end{align}
By partitioning the time interval $[0, +\infty[$ as $[0,t_\rho^{\loc,\glo} [ \cup \dots \cup [(T-1)t_\rho^{\loc,\glo} , T t_\rho^{\loc,\glo} [ \cup [T t_\rho^{\loc,\glo} , +\infty[$ and summing up the above inequalities we obtain the bound:
\begin{align}
\norm{\phi_{t}(x_0) - \xstar}_{\glo}& \leq \sum_{i = 0}^{T-1} \indicator{\{i{t_\rho^{\loc,\glo} } \leq t < (i+1){t_\rho^{\loc,\glo} }\}}(t) \cdot \min\Big\{\norm{x_0 - \xstar}_\glo - i  \radius(1 - \rho), \norm{x_0 - \xstar}_\glo - i  \radius(1 - \rho)\nonumber\\
& \quad- \radius\bigl(1-\prodeqnorm_{\loc, \glo}\e^{-c(t - i {t_\rho^{\loc,\glo} })}\bigr)\Big\} + \indicator{\{t \geq T {t_\rho^{\loc,\glo} }\}} (t) \cdot\min\Big\{\norm{x_0 - \xstar}_\glo - T \radius(1 - \rho), \nonumber\\
&\quad \prodeqnorm_{\loc, \glo} \left(\norm{x_0 - \xstar}_\glo - T \radius(1 - \rho)\right)\e^{-c(t - T {t_\rho^{\loc,\glo} })}\Big\} := \subscr{g}{B}(t) \label{eq:g_b}.
\end{align}
Finally, item~\ref{item:2_diff_norm} follows by noticing that $\subscr{g}{B}(t) \leq \linexp{t \; ;\; q, \cl, \ce, \tld}$, $t \geq 0$, for the values $\tld = T{\ln(\ratiolg \rho^{-1})}/{\ce} + \ln(\ratiolg) / \ce$, $\cl = \radius\,\ce(1-\rho)/\ln(\prodeqnorm_{\loc,\glo}\rho^{-1})$, and $\ds q = \norm{x_0 - \xstar}_\glo + \radius(1 - \rho)\frac{\ln(\ratiolg)}{\ln(\ratiolg \rho^{-1})}$.
This concludes the proof.
\end{proof}

\medskip
\begin{rem}
\label{rem:comparison}
Theorem~\ref{thm:lin_exp_decay_GWC_and_LSC_dynamics_diff_norms} sharpens the convergence bound  in~\cite[{Theorem 4 and Corollary 2}]{VC-AG-AD-GR-FB:23a}.
{This improvement stems from a more accurate intercept and linear-exponential crossing time. Differently from~\cite{VC-AG-AD-GR-FB:23a}, the bound in Theorem~\ref{thm:lin_exp_decay_GWC_and_LSC_dynamics_diff_norms} remains continuous at all times, and no jump can occur at $\tld$. Regarding the proof techniques compared to~\cite{VC-AG-AD-GR-FB:23a}, please note that the pivotal point in the proof leading to the sharper bound is inequality~\eqref{proof:ineq_key_passage}.}
An illustration between the bound in~\eqref{eq:bound_outside_the_ball_diff_norms} and the one in~\cite{VC-AG-AD-GR-FB:23a} is given in Figure~\ref{fig:comparison_bounds}.
\end{rem}

\begin{SCfigure}[5][!ht]
\centering
\includegraphics[width=.65\linewidth]{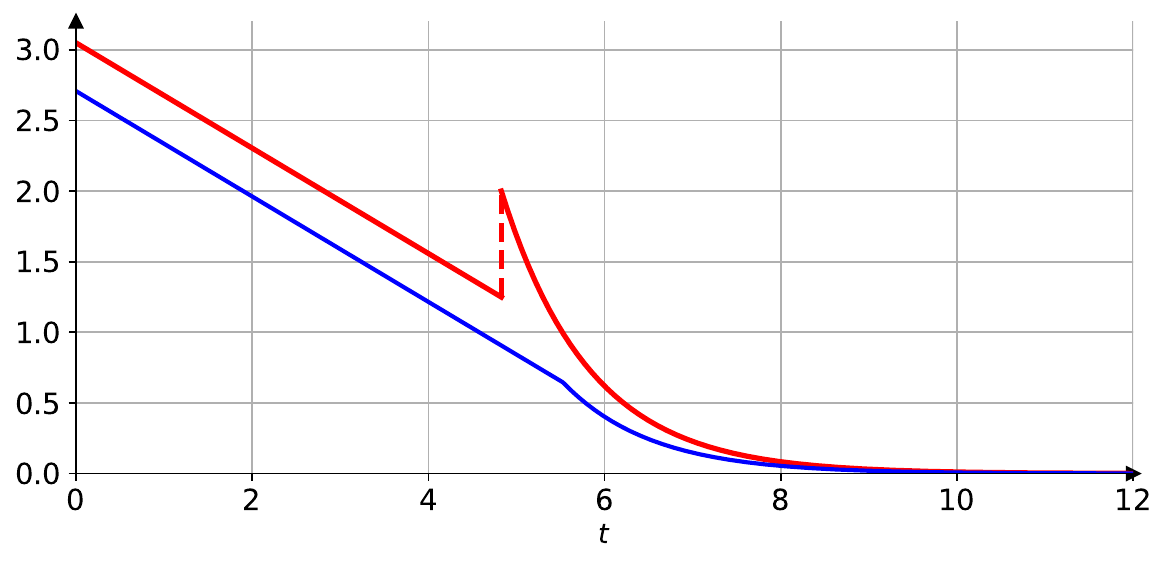}
\caption{Linear-exponential bound in equation~\eqref{eq:bound_outside_the_ball_diff_norms} (solid blue curve)
and the decay bound presented in~\cite{VC-AG-AD-GR-FB:23a} (red curve) for $\norm{x_0 - \xstar}_\glo = 2.4$, $\radius = 1$, $\ce = 1$, $\prodeqnorm_{L,G} = 2$, $\rho = 0.4$.}
\label{fig:comparison_bounds}
\end{SCfigure}

\begin{rem}
A consequence of Theorem~\ref{thm:lin_exp_decay_GWC_and_LSC_dynamics_diff_norms} is that, jointly, Assumptions (A1), (A2), and (A3) preclude the existence of any other invariant sets besides $\mcS$, and the convergence towards the equilibrium is global.
\end{rem}
\begin{rem}
Linear-exponential convergence is weaker than global exponential convergence, but stronger than global asymptotic convergence (as, e.g., we provide an explicit estimate of the time required to reach a neighborhood of the equilibrium).
\end{rem}

\begin{rem}
The bound in Theorem~\ref{thm:lin_exp_decay_GWC_and_LSC_dynamics_diff_norms} generalizes the result for equal norms in Theorem~\ref{thm:local-exp-contractivity_same_norms}. In fact, the factor $(1-\rho)/\ln(\prodeqnorm_{\loc,\glo} \rho^{-1})$ is always less than 1 for $\ratiolg>1$. Moreover, when $\ratiolg = 1$ it results $\ds \lim_{\rho \to 1} (1-\rho)/\ln(\prodeqnorm_{\loc,\glo} \rho^{-1}) = 1$, thereby exactly recovering the equal-norm result.
\end{rem}

\section{Local Stability in the Presence of External Inputs}
\label{2024a-sec:external_input}
We now characterize local ISS for \gwlsc systems w.r.t. the same norm. Specifically, we consider the system
\beq
\label{eq:dynamical_system_input}
\dot x(t) = f\bigl(t, x(t), u(t)\bigr).
\eeq
where, $\map{f}{\R_{\geq0} \times \mcC \times \mcU }{\R^n}$, the map $x \mapsto f(t, x, u)$ is locally Lipschitz, for all $t$, $u$, with $\mcC \subseteq \R^n$ $f$-invariant, open and convex, and $\mcU \subset \R^m$. 
Given $\bar u \in \R^m$, we define the set of bounded inputs $\bar \mcU := \setdef{\map{u}{\R_{\geq0}}{\mcU}}{\norm{u(t)}_\mcU \leq \bar u, \forall t \geq 0}$. We make the following:
\begin{assumptionbox}
\label{ass:set_assumption_input}
there exist norms $\|\cdot\|, \|\cdot\|_{\mcU}$ on $\mcC$ and $\mcU$, respectively, such that
\begin{enumerate}[label=\textup{($A$\arabic*')}, leftmargin=1.2 cm,noitemsep]
\item \label{ass:global-weak_contractivity_input}
for all $t$, $u$, the map $x \mapsto f(t, x, u)$ is weakly infinitesimally contracting on $\R^n$ w.r.t. $\norm{\cdot}$;
\item \label{ass:lipschitz_input}
for all $t$, $x$, the map $u \mapsto f(t, x, u)$ is Lipschitz with constant $L_u \geq 0$;
\item \label{ass:local-strong_contractivity_input}
there exist a forward-invariant set $ \mcS$ and $\ce >0$ such that, 
for all $t$, for each $u \in \bar \mcU$, the map $x \mapsto f(t, x, u(t))$ is $\ce$-strongly infinitesimally contracting on $\mcS$ w.r.t. $\norm{\cdot}$;
\item \label{ass:eq_point_input}
at $u(t) = \0_m$, for all $t$, there exists an equilibrium point $\xstar \in  \mcS$.
\end{enumerate}
\end{assumptionbox}
We begin by giving two technical lemmas, needed to prove the main result of this section.
\smallskip
\begin{lem}[Error dynamics for input-dependent systems]
\label{lem:general_responce}
Consider system~\eqref{eq:dynamical_system_input} satisfying Assumption~\ref{ass:lipschitz_input}. Then any two solutions $x(t)$ and $y(t)$ with input $\map{u_x, u_y}{\R_{\geq 0}}{\R^m}$, satisfy for almost every $t\geq 0$,
\begin{align}
D^+ \norm{x(t) - y(t)} & \leq \int_0^1\mu\Bigl( Df(y + \alpha(x - y), u_y)\Bigr)d\alpha  \norm{x(t) - y(t)}  + L_u \norm{u_x(t) - u_y(t)}_{\mcU}.\label{eq:bound_mean_value+input}
\end{align}
\end{lem}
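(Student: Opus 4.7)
The plan is to combine the weak pairing machinery from the preliminaries with the mean value theorem for locally Lipschitz maps (Theorem~\ref{thm:mean_value_thm}). First I would set $z(t) := x(t) - y(t)$ so that $\dot z(t) = f(t, x, u_x) - f(t, y, u_y)$, and add and subtract $f(t, x, u_y)$ to decompose
\[
\dot z(t) = \bigl[f(t, x, u_y) - f(t, y, u_y)\bigr] + \bigl[f(t, x, u_x) - f(t, x, u_y)\bigr],
\]
isolating a pure state-difference term and a pure input-difference term.

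Next, using the curve norm derivative formula and sub-additivity of the weak pairing in the first argument, I would write
\[
\norm{z(t)}\, D^+ \norm{z(t)} = \WP{\dot z(t)}{z(t)} \leq \WP{f(t, x, u_y) - f(t, y, u_y)}{z} + \WP{f(t, x, u_x) - f(t, x, u_y)}{z}.
\]
For the state-difference term I would apply Theorem~\ref{thm:mean_value_thm} to the map $x \mapsto f(t, x, u_y)$ to obtain $f(t, x, u_y) - f(t, y, u_y) = A\, z$, where $A := \int_0^1 Df\bigl(y + \alpha(x-y), u_y\bigr)\, d\alpha$. Lumer's equality then gives $\WP{Az}{z} \leq \mu(A) \norm{z}^2$, and a standard Riemann sum argument using the sub-additivity and positive homogeneity of the log-norm yields $\mu(A) \leq \int_0^1 \mu\bigl(Df(y + \alpha(x-y), u_y)\bigr)\, d\alpha$. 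For the input-difference term I would invoke the Cauchy--Schwartz property of the weak pairing together with Assumption~\ref{ass:lipschitz_input} to get $\WP{f(t, x, u_x) - f(t, x, u_y)}{z} \leq L_u \norm{u_x - u_y}_{\mcU} \norm{z}$.

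Combining the two bounds and dividing by $\norm{z(t)}$ on the set where it is nonzero delivers the claim; the case $\norm{z(t)} = 0$ follows by continuity (or simply because both sides are nonnegative and the right-hand side is finite). The main technical subtlety, and the place I would be most careful, is justifying the inequality $\mu\bigl(\int_0^1 M(\alpha)\, d\alpha\bigr) \leq \int_0^1 \mu(M(\alpha))\, d\alpha$ for the matrix-valued map $\alpha \mapsto Df(y + \alpha(x-y), u_y)$: this requires continuity of $\mu$ jointly with the finite sub-additivity $\mu(\sum_k h_k M_k) \leq \sum_k h_k \mu(M_k)$ and a passage to the Riemann-integral limit, which works because $Df$ is essentially bounded along the segment.
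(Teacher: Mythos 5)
Your proposal is correct and follows essentially the same route as the paper: the same add-and-subtract decomposition of $f(t,x,u_x)-f(t,y,u_y)$, the curve norm derivative formula, sub-additivity and Cauchy--Schwartz for the weak pairing, the Lipschitz bound on the input term, and the mean value theorem for the state term. The only (cosmetic) difference is the order of the last two steps: the paper pushes the weak pairing inside the integral via sub-additivity and then applies Lumer's equality to each $Df\bigl(y+s(x-y),u_y\bigr)$ pointwise, whereas you apply Lumer's equality to the integrated Jacobian $A=\int_0^1 Df\,d\alpha$ first and then use sub-additivity and positive homogeneity of the log-norm to obtain $\mu(A)\leq\int_0^1\mu\bigl(Df(y+\alpha(x-y),u_y)\bigr)\,d\alpha$ --- both yield the same bound.
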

\begin{proof}
Let $x(t)$ and $y(t)$ be two trajectories of~\eqref{eq:dynamical_system_input} with input signals $u_x, u_y$, respectively. Let $\WP{\cdot}{\cdot}$ be a weak pairing compatible with $\norm{\cdot}$. 
We compute
\begin{align}
\norm{x(t) - y(t)} &D^+ \norm{x(t)-y(t)} = \WP{f(t, x, u_x) - f(t,y, u_y)}{x - y} \label{proof_1_}\\
&\leq \WP{f(t, x, u_y) - f(t,y, u_y)}{x - y} + \norm{f(t, x, u_x) - f(t, x, u_y)}\norm{x - y} \label{proof_2_}\\
&\leq \WP{f(t, x, u_y) - f(t,y, u_y)}{x - y} + L_u \norm{u_x - u_y}_{\mcU}\norm{x - y} \label{proof_3_},
\end{align}
where in~\eqref{proof_1_} we used the curve norm derivative formula~\ref{prop:wp:curve_derivative}, in~\eqref{proof_2_} we added and subtracted $f(t,x, u_y)$ and used the sub-additivity~\ref{prop:wp:sub-additivity} and the Cauchy-Schwartz inequality~\ref{prop:wp:cauchy-schwartz}, and in~\eqref{proof_3_} we used Assumption~\ref{ass:lipschitz_input}.

Next, by dividing both side for $\norm{x(t) - y(t)}$ we get
\begin{align}
D^+ \norm{x(t) - y(t)} &\leq \frac{\WP{f(t, x, u_y) - f(t,y, u_y)}{x - y}}{\norm{x - y}} + L_u \norm{u_x - u_y}_{\mcU}\nonumber\\
&= \frac{\WP{f(t, x, u_y) - f(t,y, u_y)}{x - y}}{\norm{x - y}^2}\norm{x - y} + L_u \norm{u_x - u_y}_{\mcU}.\label{proof_wp_4}
\end{align}

By applying the mean-value Theorem~\ref{thm:mean_value_thm} to~\eqref{proof_wp_4}, a.e., we get
\begin{align}
D^+ \norm{x(t) - y(t)} & \leq \frac{\WP{\int_0^1 Df(y + s(x - y), u_y)ds( x - y)}{ x - y}}{\norm{ x - y} } \frac{\norm{x - y}}{\norm{ x - y}} + L_u \norm{u_x - u_y}_{\mcU}\label{proof_wp_5}\\
& \leq \int_0^1\frac{\WP{ Df(y + s( x - y), u_y)ds(x- y)}{x - y}}{\norm{x - y}^2}ds \norm{x - y} + L_u \norm{u_x - u_y}_{\mcU},\label{proof_wp_6}
\end{align}
where in~\eqref{proof_wp_5} we have used the weak pairing sub-additivity~\ref{prop:wp:sub-additivity}.
Next, recall that Lumer’s equality~\ref{prop:wp:lumers_eq} implies, $ \ds \frac{\WP{Az}{z}}{\WP{z}{z}} \leq \mu\bigl(A\bigr)$ for every $A\in \R^{n\times n}$ and $z \neq \0_n$. By applying this equality to~\eqref{proof_wp_6} (with $A = Df(y + s(x - y), u_y)$ and $z = x - y$) we get inequality~\eqref{eq:bound_mean_value+input}. This concludes the proof.
\end{proof}
\smallskip
The next result gives a linear-exponential bound for the solution of dynamics with saturations and additive inputs.
\smallskip
\begin{lem}[Solution of dynamics with saturations and additive inputs]
\label{lem:linear-decay_u}
Let $\ce$ and $d$ be positive scalars, and $\map{u}{\R_{\geq 0}}{\R^n}$ satisfying $\ds \norm{u(t)}_{\infty} = \subscr{u}{max} < d \ce$, for all $t$. Consider the dynamics
\beq
\label{eq:sat_input}
\dot x(t) =  - \ce \sat{d}{x(t)} + u(t), \quad x_0 = q >d.
\eeq
Then, a solution of~\eqref{eq:sat_input} satisfies
$$
x(t) \leq \linexp{t \; ;\; q, \cl, \ce, \tld} + \indicator{[\tld, +\infty[}(t)(1 - \e^{- \ce  (t - \tld)})\frac{\subscr{u}{max}}{\ce},
$$
with $\cl := d \ce - \subscr{u}{max} > 0$ and $\tld := \frac{q - d}{\cl} > 0$.
\end{lem}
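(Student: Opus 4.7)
My plan is to prove the bound by a one-dimensional comparison argument: I will dominate the (unknown) trajectory of~\eqref{eq:sat_input} by the trajectory of an autonomous ODE with constant worst-case input, and then solve that autonomous ODE explicitly as in Lemma~\ref{lem:linear-decay}. Concretely, let $z(t)$ be the solution of
\begin{equation*}
\dot z(t) = -\ce\,\sat{d}{z(t)} + \subscr{u}{max}, \qquad z(0)=q.
\end{equation*}
Since the vector field on the right of~\eqref{eq:sat_input} is (i) Lipschitz in $x$ and (ii) nondecreasing in the input, and since $u(t)\le \subscr{u}{max}$ for all $t$, a standard scalar comparison principle yields $x(t)\le z(t)$ for all $t\ge 0$. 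Thus it suffices to compute $z(t)$ explicitly and verify that it coincides with the proposed upper bound.

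For the explicit computation I mimic the two-phase analysis carried out in the proof of Lemma~\ref{lem:linear-decay}. In the saturated phase, while $z(t)>d$, the dynamics reduce to $\dot z = -d\ce+\subscr{u}{max} = -\cl$, which together with $z(0)=q>d$ gives $z(t)=q-\cl t$; the condition $\cl=d\ce-\subscr{u}{max}>0$ (guaranteed by the hypothesis $\subscr{u}{max}<d\ce$) ensures $z$ is strictly decreasing, and the crossing condition $z(\tld)=d$ yields $\tld=(q-d)/\cl$, matching the statement. In the linear phase, for $t\ge \tld$ and as long as $z(t)\in[-d,d]$, the dynamics become the affine scalar ODE $\dot z=-\ce z+\subscr{u}{max}$ with initial value $z(\tld)=d$, whose closed-form solution is
\begin{equation*}
z(t) \;=\; d\,\e^{-\ce(t-\tld)} \;+\; \frac{\subscr{u}{max}}{\ce}\bigl(1-\e^{-\ce(t-\tld)}\bigr).
\end{equation*}
Since $\subscr{u}{max}/\ce<d$, this expression stays in $[\subscr{u}{max}/\ce,d]\subset[-d,d]$, so the linear regime is never left and in particular $z$ never enters the lower-saturation regime $z<-d$. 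Hence the closed form is valid for all $t\ge\tld$.

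To conclude, I will read off from the definition~\eqref{eq:lin_exp_function} that $\linexp{t;q,\cl,\ce,\tld}=q-\cl t=z(t)$ for $t\le\tld$, and $\linexp{t;q,\cl,\ce,\tld}=(q-\cl\tld)\e^{-\ce(t-\tld)}=d\,\e^{-\ce(t-\tld)}$ for $t>\tld$ because $q-\cl\tld=q-(q-d)=d$. Therefore $z(t)$ equals $\linexp{t;q,\cl,\ce,\tld}+\indicator{[\tld,+\infty[}(t)\bigl(1-\e^{-\ce(t-\tld)}\bigr)\subscr{u}{max}/\ce$ exactly, and the comparison inequality $x(t)\le z(t)$ gives the claimed bound. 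The only delicate point I expect is the justification of the comparison step when the right-hand side of~\eqref{eq:sat_input} is merely Lipschitz (not $C^1$) in $x$; however, Lipschitz continuity is enough for uniqueness of solutions, and scalar comparison for Lipschitz vector fields monotone in the input is standard, so this should not be a real obstacle.
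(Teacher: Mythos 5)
Your proposal is correct and follows essentially the same route as the paper: the paper likewise dominates the trajectory by the solution of the saturated ODE driven by the constant worst-case input $\subscr{u}{max}$ and then solves that autonomous system in the same two phases (linear decay until the crossing time $\tld=(q-d)/\cl$, then the affine-exponential regime). Your explicit appeal to the scalar comparison principle is in fact slightly more careful than the paper's shorthand ``$\dot x(t)\leq \dot y(t)$'', so nothing further is needed.
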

\smallskip
\begin{proof}
Using the definition of saturation function, for all $t \in \R_{\geq 0}$ we can upper bound the ODE~\eqref{eq:sat_input} as
\beq
\label{eq:LED:ode_proof_u}
\dot x(t)  \leq \dot y(t) := 
\begin{cases}
- d \ce + \subscr{u}{max} & \textup{ if } y(t) > d,\\
- \ce x(t) + \subscr{u}{max} & \textup{ if } y(t) \in [-d, d],\\
d \ce + \subscr{u}{max} & \textup{ if } y(t) < -d,\\
\end{cases}
\eeq
which, in each interval $[t_0, t_1] \subseteq \R_{\geq 0}$ where the solution is continuous and does not change regime, has general solution
\begin{align}
\label{eq:LED:ode_proof_sol_u}
y(t)
& =
\begin{cases}
\ds (- d \ce  + \subscr{u}{max})t + y(t_0) & \textup{ if } y(t) > d,\\
\ds \Bigl(y(t_0) - \frac{\bar u}{\ce}\Bigr)\e^{- \ce  (t - t_0)} + \frac{\subscr{u}{max}}{\ce} & \textup{ if } y(t) \in [-d, d],\\
\ds (d \ce  + \subscr{u}{max})t + y(t_0) & \textup{ if } y(t) < - d.\\
\end{cases}
\end{align}
At time $t = 0$, we have $x_0 = q >d$. For continuity of the solution, there exists $\tstar$ such that $y(t) > d$ for all $t \in [0,\tstar]$. Thus from~\eqref{eq:LED:ode_proof_sol_u} and being $x(t_0 = 0) = q$, it is 
$$
y(t) = (- d \ce + \subscr{u}{max})t + q,
$$
for all $t \in [0,\tstar]$. Moreover being $\subscr{u}{max} < d \ce$, the function $y(t)$ is decreasing, the time value $\tstar$ is finite and there exists a time, say it $\bar t$ such that $y(\bar t) = d$. Let $\cl : = d \ce - \subscr{u}{max}$, we have
\[
y(\bar t) = d \iff  - \cl \bar t + q = d  \iff \bar t = \frac{q - d}{\cl} := \tld.
\]

In summary, we have shown that $y(t) = q - \cl t $ for all $t \in [0,\tld]$ and is equal to $d$ at time $\tld$. Therefore from~\eqref{eq:LED:ode_proof_sol_u} and being $y(\tld) = q - \cl \tld$, for all time $t > \tld$ we have 
$$
y(t) = \bigl(q - \cl \tld\bigr) \e^{- \ce  (t - \tld)}+ \bigl(1 - \e^{- \ce  (t - \tld)}\bigr)\frac{\subscr{u}{max}}{\ce}.
$$
Specifically, $y(t) >0$ for all $t > \tld$, thus it can never be $y(t) < -d$.
This concludes the proof.
\end{proof}
\smallskip
We are now ready to state the following:
\smallskip
\bt[Local ISS for input-dependent \gwlsc systems]\label{lem:sat_error_input}
Consider system~\eqref{eq:dynamical_system_input} satisfying Assumptions~\ref{ass:global-weak_contractivity_input} --~\ref{ass:eq_point_input}. Let $\radius$ be the largest radius such that $\ball{}{\xstar, \radius} \subseteq \mcS$, $\bar u < \radius \ce$, and $\subscr{u}{max} := \sup_{\tau \in [0,t]}\norm{u_x(\tau)}_\mcU \leq \bar{u}$.
For each trajectory $x(t)$ with input $u_x \in \bar \mcU$ starting from $x_0 \notin \mcS$, for almost every $t\geq 0$, we have:
\begin{enumerate}
\smallskip
\item \label{eq:bound_dini_input}
$
\ds D^+ \norm{x(t) {-} \xstar} \leq -\ce \satur_{\radius} (\norm{x(t){-}\xstar}) {+} L_u \norm{u_x(t)}_{\mcU};
$
\smallskip
\item
\label{eq:lin-exp_input}
$
\ds \norm{x(t) {-} \xstar} \leq \linexp{t \; ;\; q, \cl, \ce, \tld} + \indicator{[\tld, +\infty[}(t)\frac{L_u}{\ce} (1 - \e^{-\ce t})\subscr{u}{max},
$

with
\bi
\item exponential decay rate $\ce > 0$;
\item intercept $\ds q = \norm{x_0-\xstar}$;
\item linear decay rate $\cl =  \radius \ce - \subscr{u}{max}$;
\item linear-exponential crossing time $\tld ={(q - \radius)}/{\cl}$.
\ei
\end{enumerate}
\et
\begin{proof}
Consider an arbitrary trajectory $x(t)$ starting from $x_0 \notin \mcS$ with input $u_x$ and a second trajectory equal to the equilibrium $\xstar$ with input $u = \0_m$. To prove statement~\ref{eq:bound_dini_input}, let $\mu$ be the log-norm associated to $\norm{\cdot}$. By applying inequality~\eqref{eq:bound_mean_value+input} to those trajectories, for almost every $t \geq 0$, we have
\begin{align}
D^+ \norm{x(t) - \xstar} & \leq \int_0^1\mu\Bigl( Df(\xstar + \alpha(x(t) - \xstar), 0)\Bigr)d\alpha \norm{x(t) - \xstar} + L_u \norm{u_x}_{\mcU}.
\end{align}
The proof follows by using similar reasoning as the one in the proof of Lemma~\ref{lem:saturated_error_dynamics}.
Item~\ref{eq:lin-exp_input} follows by using the Comparison Lemma~\cite[pp.~102-103]{HKK:02} and Lemma~\ref{lem:linear-decay_u} to upper bound the solution to the differential inequality~\ref{eq:bound_dini_input}.
\end{proof}

\section{Tackling Linear Programs}
\label{2024a-sec:application}
We now show the efficacy of the previous results by applying them to a dynamical system solving the LP problem.
Given $c\in \R^n$, $A \in \R^{m\times n}$ and $b\in \R^m$, we consider the \emph{linear program}:
\beq\label{eq:lin_program}
\begin{aligned}
\min_{x \in \R^n} \quad & c^\top x, \\
\text{s.t.} \quad & Ax \leq b,
\end{aligned}
\eeq
and its equivalent unconstrained formulation
\beq
\label{eq:lin_program_unconstrained}
\min_{x \in \R^n} c^\top x + \iota_{\mathcal{I}_b}(Ax),
\eeq
where $\mathcal{I}_b = \setdef{y \in \R^m}{y - b \leq \0_m}$. We assume that~\eqref{eq:lin_program_unconstrained} admits a unique equilibrium.
Note that~\eqref{eq:lin_program_unconstrained} is a particular composite minimization problem:
\begin{equation}\label{eq:composite_minimization}
\min_{x \in \R^n} f(x) + g(Ax),
\end{equation}
with $f(x) = c^\top x$ and $g(Ax) = \iota_{\mathcal{I}_b}(Ax)$.
To solve~\eqref{eq:lin_program_unconstrained}, we leverage the proximal augmented Lagrangian approach proposed in~\cite{NKD-SZK-MRJ:19} and consider the \emph{proximal augmented Lagrangian}, $\map{\tilde L_\gamma}{\R^n \times \R^m}{\R}$, defined by
\begin{equation}
\label{eq:proximal_augmented_Lagrangian}
\tilde L_\gamma(x,\lambda) = f(x) + M_{\gamma g}(Ax + \gamma \lambda) - \frac{\gamma}{2}\|\lambda\|_2^2,
\end{equation}
where $\lambda \in \R^m$ is the Lagrange multiplier, $\gamma > 0$ is a parameter, and $M_{\gamma g}$ is Moreau envelope of $g$.
\smallskip
\begin{rem}
For $f$ continuously differentiable, convex, and with a Lipschitz continuous gradient, and $g$ convex, closed and proper, solving the composite minimization problem~\eqref{eq:composite_minimization} corresponds to finding saddle points of~\eqref{eq:proximal_augmented_Lagrangian}, simultaneously updating the primal and dual variables~\cite[Theorem~2]{NKD-SZK-MRJ:19}.
\end{rem}
\smallskip
Next, consider the \emph{continuous-time augmented primal-dual dynamics}~\cite{NKD-SZK-MRJ:19} (that can be interpreted as a continuous-time neural network) associated to the proximal augmented Lagrangian of problem~\eqref{eq:lin_program_unconstrained}
\beq
\label{eq:prox-aug-primal-dual_lin_progr}
\begin{aligned}
\dot{x} &= -\nabla_x \tilde L_\gamma(x,\lambda) = - c - A^\top \nabla M_{\gamma \iota_{\mathcal{I}_b}}(Ax + \gamma \lambda) = - c - \frac{1}{\gamma}A^\top \relu\bigl( Ax + \gamma \lambda - b\bigr),\\
\dot{\lambda} &= \nabla_{\lambda} \tilde L_\gamma(x,\lambda) =  - \gamma\lambda + \gamma\nabla M_{\gamma \iota_{\mathcal{I}_b}}(Ax + \gamma \lambda) = - \gamma \lambda + \relu\bigl( Ax + \gamma \lambda - b\bigr).
\end{aligned}
\eeq
We let $\map{\subscr{F}{LP}}{\R^{n+m}}{\R^{n+m}}$ denote the vector field for~\eqref{eq:prox-aug-primal-dual_lin_progr}.
\smallskip
\begin{rem}
Equation~\eqref{eq:prox-aug-primal-dual_lin_progr} follows directly after noticing that for almost every $y\in \R^m$ it results 
\begin{align*}
\nabla M_{\gamma \iota_{\mathcal{I}}}(y) & = \frac{1}{\gamma}\bigl(y - \proj_{\iota_{\mathcal{I}}}(y) \bigr)= \frac{1}{\gamma}\bigl(y - \min \{y , b \} \bigr) =  \frac{1}{\gamma}\relu\bigl(y - b\bigr).
\end{align*}
\end{rem}
The next result characterizes the convergence of~\eqref{eq:prox-aug-primal-dual_lin_progr}.
\smallskip
\bt[Convergence of the linear program]
\label{thm:contr_lin_prog}
Consider the dynamics~\eqref{eq:prox-aug-primal-dual_lin_progr} and let $(\xstar, \lambda^\star)\in \R^{n+m}$ be an equilibrium point. If $D\subscr{F}{LP}\bigl(\xstar,\lambda^\star)$ is Hurwitz, then any solution of~\eqref{eq:prox-aug-primal-dual_lin_progr} linear-exponentially converges towards $(\xstar, \lambda^\star)$.
\et
\begin{proof}
To prove the statement we show that~\eqref{eq:prox-aug-primal-dual_lin_progr} satisfies the assumptions of Theorem~\ref{thm:lin_exp_decay_GWC_and_LSC_dynamics_diff_norms}. First, we prove that the system is globally-weakly contracting. To this purpose, let $z := (x,\lambda) \in \R^{n+m}$, $y := Ax + \gamma \lambda - b$ and define $G(y) := D\relu\bigl(y\bigr)$, for almost every $y \in \R^m$. The Jacobian of~\eqref{eq:prox-aug-primal-dual_lin_progr} is
\[
D\subscr{F}{LP}(z) =
\begin{bmatrix}
- \frac{1}{\gamma}A^\top G(y)A & \!\!- A^\top G(y) \\
G(y)A & \!\! -\gamma (I_m - G(y))
\end{bmatrix}.
\]
Being $0 \preceq G(y) \preceq I_m$\footnote{For every $\gamma >0$, $0 \preceq \nabla^2 M_{\gamma g}(y) \preceq \frac{1}{\gamma} I_n$, a.e. $y \in \R^m$~\cite[Lemma 18]{AD-VC-AG-GR-FB:23f}.}, a.e. $y \in \R^m$, we have
\begin{align*}
\sup_{z} \mu_2 \bigl(D\subscr{F}{LP}(z)\bigr) \leq \max_{0 \preceq G \preceq I_m} \mu_2 \left(
\begin{bmatrix}
- \gamma^{-1} A^\top GA & - A^\top G \\
GA & \gamma \bigl(G - I_m\bigr)
\end{bmatrix}\right),
\end{align*}
By definition of $\mu_2$, we have that
\begin{align*}
\mu_2 \left(
\begin{bmatrix}
- \gamma^{-1} A^\top GA & - A^\top G \\
GA & \gamma \bigl(G - I_m\bigr)
\end{bmatrix}\right)  &=
\subscr{\lambda}{max}\Biggl(
\begin{bmatrix}
- \gamma^{-1} A^\top GA & 0 \\
0 & \gamma \bigl(G - I_m\bigr)
\end{bmatrix} 
\Biggr)\\
&= \max\{\subscr{\lambda}{max}\bigl(- \gamma^{-1}A^\top GA\bigr), \subscr{\lambda}{max}\bigl(\gamma \bigl(G - I_m\bigr)\bigr)\} \leq 0.
\end{align*}
The last equality follows from the fact that $\subscr{\lambda}{max}\bigl(- \gamma^{-1}A^\top GA\bigr) = \subscr{\lambda}{max}\bigl(\gamma \bigl(G - I_m\bigr)\bigr) \leq 0$. In particular, the equality $\subscr{\lambda}{max}\bigl(-\gamma \bigl(G - I_m\bigr)\bigr) \leq 0$ follows directly from $0 \preceq G \preceq I_m$; while $\subscr{\lambda}{max}\bigl(- \gamma^{-1} A^\top GA\bigr) \leq 0$, follows noticing that $A^\top GA \succeq 0$\footnote{$A^\top GA \succeq 0 \iff x^\top A^\top G A x \geq 0 \quad \forall x \in \R^n \iff y^\top G y \geq 0$, $\forall y \in \R^m \iff G \succeq 0$.}. This implies that~\eqref{eq:prox-aug-primal-dual_lin_progr} is weakly contracting on $\R^{n+m}$ w.r.t. $\norm{\cdot}_2$.
Thus~\eqref{eq:prox-aug-primal-dual_lin_progr} is weakly contracting on $\R^{n+m}$ {with respect to} $\norm{\cdot}_2$. Next, we prove that the system is locally-strongly contracting. 
To do so, we first note that for any equilibrium point $z^\star := (\xstar, \lambda^\star)$ of~\eqref{eq:prox-aug-primal-dual_lin_progr}, {both $D\relu(y^\star)$ and $D\subscr{F}{LP}(z^\star)$ are differentiable in a neighborhood of $y^\star$ and $z^\star$, respectively}.
{In fact, for each $i$, the KKT conditions ensures that either $(A\xstar)_i - b_i = 0$ or $\lambda_i^{\star} = 0$.  In turn, this implies that $y^{\star}_i = (A\xstar)_i + \gamma {\lambda}^{\star}_i - b_i \neq 0$, for all $i$.}
Now, being by assumption $D\subscr{F}{LP}(z^\star)$ Hurwitz, there exists $Q$ invertible such that $\wlognorm{2}{Q}{D\subscr{F}{LP}(z^\star)}<0$~\cite[Corollary 2.33]{FB:23-CTDS}.
Let $\mathcal{K}$ be the set of differentiable points in a neighborhood of $z^\star$. Then, by the continuity property of the log-norm, there exists $\ball{2,Q}{z^\star, p}$, with $p := \sup \setdef{p > 0}{\ball{2,Q}{z^\star, p} \subset \mathcal{K}}$, where $D\subscr{F}{LP}(z)$ exists and $\wlognorm{2}{Q}{D\subscr{F}{LP}(z)}< - \ce$ for all $z \in \ball{2,Q}{z^\star, p}$, for some $\ce >0$.
Therefore~\eqref{eq:prox-aug-primal-dual_lin_progr} is strongly infinitesimally contracting w.r.t. $\norm{\cdot}_{2,Q}$ in $\ball{2,Q}{z^\star, p}$. This concludes the proof.
\end{proof}
\smallskip
A key hypothesis of Theorem~\ref{thm:contr_lin_prog} is that ${D\!\subscr{F}{LP}({\xstar\!,\lambda^\star})}$ is Hurwitz. {This hypothesis can only be verified by prior knowledge of the LP solution. This limitation motivates the following conjecture, which would relate stability of ${D\!\subscr{F}{LP}({\xstar\!,\lambda^\star})}$ to matrix $A$ and the KKT conditions.} 
\smallskip
\begin{conj}
\label{conj_lin_prog}
Let $(\xstar, \lambda^\star)$ be the equilibrium of~\eqref{eq:prox-aug-primal-dual_lin_progr}. The LP~\eqref{eq:lin_program} has a unique solution, $\xstar$, if and only if $D\subscr{F}{LP}\bigl(\xstar,\lambda^\star)$ is Hurwitz.
\end{conj}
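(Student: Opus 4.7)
The plan is to exploit the KKT structure of the equilibrium to reduce the Hurwitz question for $D\subscr{F}{LP}(\xstar,\lambda^\star)$ to a pure rank question on the active-constraint submatrix of $A$, and then to relate that rank to uniqueness of the LP solution. I would proceed in three stages: block-decompose the Jacobian via the active set, analyze the resulting saddle block with a Lyapunov/LaSalle argument, and finally translate the rank condition into an LP-uniqueness statement.

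First, I would recall from the proof of Theorem~\ref{thm:contr_lin_prog} that at the equilibrium $y^\star_i := (A\xstar)_i + \gamma\lambda^\star_i - b_i$ is nonzero for every $i$, so $G^\star := D\relu(y^\star)$ is a well-defined diagonal $0/1$ matrix whose $i$-th entry is $1$ precisely when constraint $i$ is active (equivalently, under strict complementarity, when $\lambda^\star_i > 0$). Write $\mathcal{A}$ for the active set and permute indices so that active constraints come first. Then the Jacobian takes the block-triangular form
$$D\subscr{F}{LP}(\xstar,\lambda^\star) = \begin{bmatrix} -\gamma^{-1} A_{\mathcal{A}}^\top A_{\mathcal{A}} & -A_{\mathcal{A}}^\top & 0 \\ A_{\mathcal{A}} & 0 & 0 \\ 0 & 0 & -\gamma I_{m - |\mathcal{A}|} \end{bmatrix},$$
where $A_{\mathcal{A}}$ collects the active rows of $A$. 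The bottom-right block contributes the eigenvalue $-\gamma < 0$, so the Hurwitz property reduces to whether the upper-left saddle block
$$M = \begin{bmatrix} -\gamma^{-1} A_{\mathcal{A}}^\top A_{\mathcal{A}} & -A_{\mathcal{A}}^\top \\ A_{\mathcal{A}} & 0 \end{bmatrix}$$
is Hurwitz.

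Second, I would analyze $M$ using a Lyapunov argument on $\dot{(v,w)} = M(v,w)$ with $V(v,w) = \tfrac{1}{2}(\norm{v}_2^2 + \norm{w}_2^2)$: the cross terms cancel and give $\dot V = -\gamma^{-1}\norm{A_{\mathcal{A}} v}_2^2 \leq 0$. A short calculation with an eigenvector associated to a purely imaginary $i\omega$, $\omega \neq 0$, forces $A_{\mathcal{A}} v = 0$ and then $v = 0$, ruling out nonzero imaginary eigenvalues. By LaSalle, the largest invariant set in $\{A_{\mathcal{A}} v = 0\}$ equals $\ker A_{\mathcal{A}} \times \ker A_{\mathcal{A}}^\top$. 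Therefore $M$ is Hurwitz if and only if both null spaces are trivial, i.e., if and only if $A_{\mathcal{A}}$ has full column rank and full row rank, which is equivalent to $A_{\mathcal{A}}$ being square and invertible.

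Finally, I would translate this rank condition into LP-uniqueness. The easy direction is $(\Leftarrow)$: if $A_{\mathcal{A}}$ is square invertible, then $\xstar$ is the unique solution of $A_{\mathcal{A}} x = b_{\mathcal{A}}$, and together with primal feasibility this yields a unique primal optimizer. The delicate direction is $(\Rightarrow)$, and this is where I expect the main obstacle: primal uniqueness alone is compatible with a degenerate vertex at which $|\mathcal{A}| > n$ while $\operatorname{rank}(A_{\mathcal{A}}) = n$, in which case $A_{\mathcal{A}}$ is non-square and $M$ is singular. The natural remedy is to combine the standing assumption of Section~\ref{2024a-sec:application} that~\eqref{eq:lin_program_unconstrained} has a \emph{unique} equilibrium $(\xstar,\lambda^\star)$ (which forces uniqueness of $\lambda^\star$ and hence, via $A_{\mathcal{A}}^\top \lambda^\star_{\mathcal{A}} = -c$, full row rank of $A_{\mathcal{A}}$) with strict complementarity (pinning $|\mathcal{A}|$ to the support of $\lambda^\star$). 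Making this LP-degeneracy bookkeeping precise, and showing it is both necessary and sufficient for the conjecture, is the crux.
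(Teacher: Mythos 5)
First, a point of order: the statement you were asked to prove is labeled a \emph{conjecture} in the paper, and the authors explicitly defer its proof to future work, so there is no proof of record to compare your attempt against. Judged on its own merits, your first two stages are correct and constitute real progress. The block decomposition of $D\subscr{F}{LP}(\xstar,\lambda^\star)$ via $G^\star = D\relu(y^\star)$ is right (the matrix is in fact block \emph{diagonal}, the $-\gamma I_{m-|\mathcal{A}|}$ block being fully decoupled), and your Lyapunov/LaSalle argument on the saddle block $M$ is sound: the cross terms cancel, the largest invariant set in $\{A_{\mathcal{A}}v=0\}$ consists entirely of equilibria and equals $\ker A_{\mathcal{A}} \times \ker A_{\mathcal{A}}^\top$, so $M$ is Hurwitz if and only if $A_{\mathcal{A}}$ is square and invertible. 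One caveat you half-acknowledge: the index set $\mathcal{A}$ selected by $G^\star$ is the \emph{support of $\lambda^\star$}, which coincides with the geometric active set only under strict complementarity; absent that, $y^\star_i=0$ for some $i$ and the Jacobian does not even exist at $z^\star$ (the paper's own claim that $y^\star_i\neq 0$ follows from the KKT conditions has the same hidden hypothesis).

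The genuine gap is exactly where you place it: stage three is not a proof but a statement of the remaining problem. Translating ``$A_{\mathcal{A}}$ square invertible'' into ``the LP~\eqref{eq:lin_program} has a unique solution'' requires degeneracy hypotheses that the conjecture does not supply, and your proposal does not resolve this; it only names the ingredients (unique dual multiplier forcing $\ker A_{\mathcal{A}}^\top=\{\0_m\}$ via $A_{\mathcal{A}}^\top\lambda^\star_{\mathcal{A}}=-c$, strict complementarity pinning $\mathcal{A}$, primal uniqueness forcing $\ker A_{\mathcal{A}}=\{\0_n\}$). Note moreover that if one reads ``the equilibrium'' as presupposing a unique KKT pair (consistent with the section's standing assumption), then the Goldman--Tucker theorem supplies strict complementarity for free, both kernels are trivial by the arguments you sketch, and the right-hand side of the equivalence becomes automatic --- so the ``if and only if'' either trivializes or must be read without the uniqueness presupposition, in which case degenerate vertices (where the dual is non-unique and $D\subscr{F}{LP}$ may fail to exist) threaten the statement itself. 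In short: you have correctly reduced the conjecture to a precise linear-algebraic condition and honestly flagged that the LP-theoretic half is open; this is a valuable reduction, not a complete proof.
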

\paragraph{Numerical Experiments}
Consider the following LP
\beq\label{eq:lin_program_eq}
\begin{aligned}
\min_{x \in \R^3} \quad & x_1 + x_2 + x_3, \\
\text{s.t.} \quad & -1 \leq x_1 \leq 1, -1 \leq x_2 \leq 1, -1 \leq x_3 \leq 1.
\end{aligned}
\eeq
for which the unique optimal solution is $\xstar = (-1, -1, -1)$.

Next, consider the corresponding continuous-time augmented primal-dual dynamics~\eqref{eq:prox-aug-primal-dual_lin_progr}
\beq
\begin{aligned}
\label{eq:prox-aug-primal-dual_lin_progr_ex}
\dot{x}_1 &= -1 - \frac{1}{\gamma}\Bigl(\relu\left(x_1 + \gamma \lambda_1 - 1\right) - \relu\left(-x_1 + \gamma \lambda_4 - 1\right)\Bigr), \\
\dot{x}_2 &= -1 - \frac{1}{\gamma}\Bigl(\relu\left(x_2 + \gamma \lambda_2 - 1\right) - \relu\left(-x_2 + \gamma \lambda_5 - 1\right)\Bigr), \\
\dot{x}_3 &= -1 - \frac{1}{\gamma}\Bigl(\relu\left(x_3 + \gamma \lambda_3 - 1\right) - \relu\left(-x_3 + \gamma \lambda_6 - 1\right)\Bigr), \\
\dot{\lambda}_1 &= -\gamma \lambda_1 + \relu\left(x_1 + \gamma \lambda_1 - 1\right) \\
\dot{\lambda}_2 &= -\gamma \lambda_2 + \relu\left(x_2 + \gamma \lambda_2 - 1\right), \\
\dot{\lambda}_3 &= -\gamma \lambda_3 + \relu\left(x_3 + \gamma \lambda_3 - 1\right), \\
\dot{\lambda}_4 &= -\gamma \lambda_4 + \relu\left(-x_1 + \gamma \lambda_4 - 1\right), \\
\dot{\lambda}_5 &= -\gamma \lambda_5 + \relu\left(-x_2 + \gamma \lambda_5 - 1\right), \\
\dot{\lambda}_6 &= -\gamma \lambda_6 + \relu\left(-x_3 + \gamma \lambda_6 - 1\right).
\end{aligned}
\eeq
We set $\gamma = 0.5$ and simulate the dynamics~\eqref{eq:prox-aug-primal-dual_lin_progr} over the time interval $t \in [0, 40]$ with a forward Euler discretization with step-size $\Delta t = 0.001$, starting from $150$ initial conditions generated as follows: we first randomly generate an initial condition and then define the remaining 149 initial conditions by adding, to the first initial condition, random noise generated from a normal distribution with mean $0$ and standard deviation $2$. The simulation results are that each resulting trajectory converges to the point $z^\star = (-1, -1, -1, 0, 0, 0, 1, 1, 1)$. Next, we numerically found that $DF_{LP}(z^\star)$ is Hurwitz (in alignment with our conjecture).
Figure~\ref{fig:lin_prog_log} illustrates the mean and standard deviation of the lognorm of the $\ell_2$ distance of the 150 simulated trajectories of~\eqref{eq:prox-aug-primal-dual_lin_progr} w.r.t. $z^\star$. In agreement with Theorem~\ref{thm:contr_lin_prog} the convergence is linearly-exponentially bounded.
\begin{figure}[!h]
\centering
\includegraphics[width=0.7\linewidth]{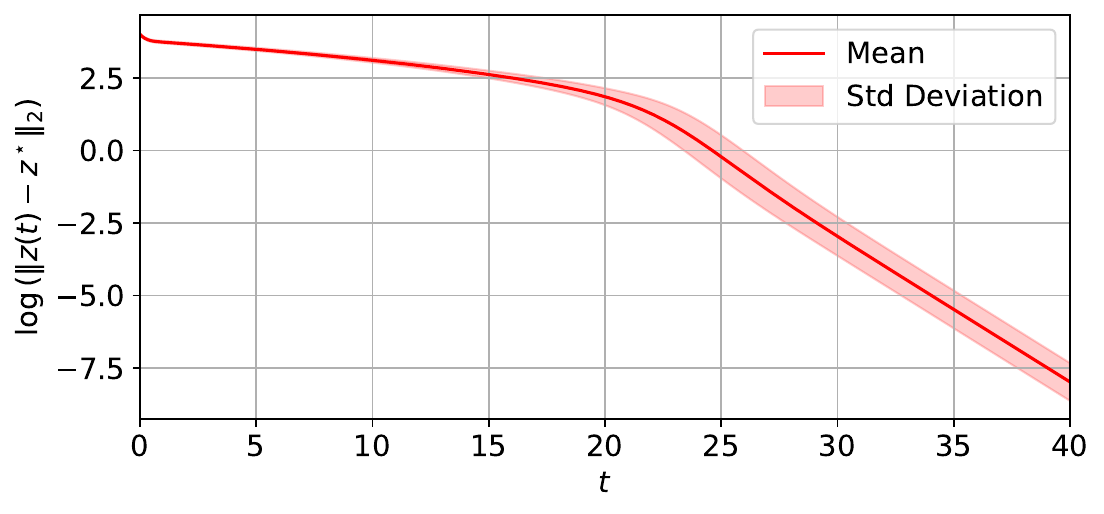}
\caption{Mean (red curve) and standard deviation (shadow curve) of the lognorm of the Euclidean distance of 150 simulated trajectories of~\eqref{eq:prox-aug-primal-dual_lin_progr} with respect to the equilibrium point $z^\star$. In agreement with Theorem~\ref{thm:contr_lin_prog} the convergence is linearly-exponentially bounded.}
\label{fig:lin_prog_log}
\end{figure}

\section{Conclusion}
\label{2024a-sec:conclusion}
We analyzed the convergence characteristics of \gwlsc dynamics, which naturally arise from convex optimization problems with a unique minimizer. For such dynamics, we
showed linear-exponential convergence to the equilibrium.
Specifically, we demonstrated that linear-exponential dependency arises naturally in certain dynamics with saturations and used this result for our convergence analysis. Depending on the norms where the system is \gwlsc, we considered two different scenarios that required two distinct mathematical approaches, yielding convergence bounds that are sharper than those in~\cite{VC-AG-AD-GR-FB:23a}. Finally, after giving a sufficient condition for local ISS, we illustrated our results on the continuous-time augmented primal-dual dynamics solving LPs. Our results motivated a conjecture relating the optimal solution of LPs to the local stability properties of the equilibrium of the resulting dynamics. 
Our future work will include proving this conjecture, extending our ISS analysis to the case of different norms and further developing our results to analyze firing-rate neural networks. Additionally, a possible future research direction could be applying our result for time-critical optimization problems.
\section*{Acknowledgement}
The authors wish to thank Eduardo Sontag for stimulating conversations about contraction theory.

This work was in part supported by AFOSR project FA9550-21-1-0203 and NSF Graduate Research Fellowship under Grant No. 2139319.

Giovanni Russo wishes to acknowledge financial support by the European Union - Next Generation EU, under PRIN 2022 PNRR, Project “Control of Smart Microbial Communities for Wastewater Treatment”.

\appendices
\section{Contraction times with respect to distinct norms}\label{apx:proof_lemma}

First we recall the following~\cite[Lemma V.1]{VC-AG-AD-GR-FB:23a}
\begin{lem}[Inclusion between balls computed with respect to different norms]
\label{lem:ineq_balls_inclusions}
Given two norms $\norm{\cdot}_{\alpha}$ and $\norm{\cdot}_{\beta}$ on $\R^n$, for all $x \in \R^n$ and $r>0$, it holds
\begin{equation}
\label{ineq:balls:inclusions}
\ball{ }{x, r/\kab; \beta} \subseteq \ball{}{x, r; \alpha} \subseteq  \ball{}{x, r \kba; \beta}.
\end{equation}
\end{lem}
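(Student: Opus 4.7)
The plan is to reduce each of the two inclusions to a one-line application of the equivalence-of-norms inequalities stated in the Mathematical Preliminaries, namely $\norm{x}_{\alpha} \leq \kab \norm{x}_{\beta}$ and $\norm{x}_{\beta} \leq \kba \norm{x}_{\alpha}$ for every $x \in \R^n$. Since the claim is an inclusion between two sets, I would pick an arbitrary point $z$ in the inner ball and translate the bound on one norm into a bound on the other by a single application of the appropriate equivalence coefficient.

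Concretely, for the left inclusion $\ball{ }{x, r/\kab; \beta} \subseteq \ball{}{x, r; \alpha}$, I would take any $z$ with $\norm{z-x}_{\beta} \leq r/\kab$ and apply $\norm{z-x}_{\alpha} \leq \kab \norm{z-x}_{\beta}$: the factor $\kab$ cancels and yields $\norm{z-x}_{\alpha} \leq r$, so that $z$ lies in the $\alpha$-ball of radius $r$ centered at $x$. The right inclusion $\ball{}{x, r; \alpha} \subseteq \ball{}{x, r\kba; \beta}$ is entirely symmetric: if $\norm{z-x}_{\alpha} \leq r$, then $\norm{z-x}_{\beta} \leq \kba \norm{z-x}_{\alpha} \leq r \kba$, which is exactly membership in the $\beta$-ball of radius $r \kba$.

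I do not foresee any real obstacle: the entire content of the lemma is bookkeeping with the two equivalence coefficients, and no structural property of the norms is exploited beyond their pointwise comparability. The only subtlety worth flagging is that one should use the \emph{minimal} equivalence coefficients, as declared in the Preliminaries; this is what makes the inclusions tight in the sense that no smaller multiplicative factor on the radii would make the statement hold for all pairs of points and radii.
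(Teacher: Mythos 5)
Your proof is correct: each inclusion is exactly the one-line application of the appropriate equivalence inequality, and the paper itself states this lemma without proof (recalling it from the cited reference), so there is no divergence to report. The only minor caveat is that minimality of $\kab$ and $\kba$ is irrelevant to the validity of the inclusions --- any coefficients satisfying the equivalence inequalities would do --- and matters only for the tightness remark you append at the end.
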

The following Lemma is inspired by~\cite[Theorem V.2]{VC-AG-AD-GR-FB:23a}. For completeness, we here provide a self-contained proof.
\begin{lem}[Contraction times with respect to distinct norms]
\label{lem:contraction_time}
Given $\norm{\cdot}_{\alpha}$ and $\norm{\cdot}_{\beta}$ norms on $\R^n$ with equivalence ratio $\prodeqnorm_{\alpha, \beta}$, consider system~\eqref{eq:dynamical_system} satisfying Assumptions~\ref{ass:local-strong_contractivity},~\ref{ass:eq_point} with $\norm{\cdot}_\loc = \norm{\cdot}_{\alpha}$. Then, for each $0<\rho<1$,
\begin{enumerate}
\item\label{fact:contraction-time} the $\rho$-contraction time is $t_\rho = \ln(\rho^{-1})/c$;
\item\label{fact:contraction-time-mixed} the $\rho$-contraction time with respect to the norm $\norm{\cdot}_{\beta}$ is $t_\rho^{\alpha,\beta} = \ln(\ratioab\, \rho^{-1})/c$.
\end{enumerate}
\end{lem}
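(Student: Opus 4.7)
\medskip
\textbf{Proof proposal.} Both items will follow directly from Assumptions~\ref{ass:local-strong_contractivity} and~\ref{ass:eq_point}, together with the ball inclusion recalled in Lemma~\ref{lem:ineq_balls_inclusions}; no new contraction machinery is needed. The plan is to first handle item~\ref{fact:contraction-time} by a direct application of the definition of $c$-strong infinitesimal contraction in the norm $\norm{\cdot}_\alpha$, and then bootstrap item~\ref{fact:contraction-time-mixed} from it by sandwiching the $\norm{\cdot}_\beta$-ball between two $\norm{\cdot}_\alpha$-balls.

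For item~\ref{fact:contraction-time}, I would fix $r>0$ and an arbitrary initial condition $x_0 \in \ball{\alpha}{\xstar, r}$. Since $\xstar$ is an equilibrium, the ``virtual'' trajectory $t\mapsto \phi_t(\xstar)$ is constantly $\xstar$, so Assumption~\ref{ass:local-strong_contractivity} gives
\[
\norm{\odeflowtx{t}{x_0}-\xstar}_\alpha \le \e^{-ct}\norm{x_0-\xstar}_\alpha \le \e^{-ct}\,r,
\]
for all $t\ge 0$ such that the trajectory remains in $\mcS$ (which holds because $\ball{\alpha}{\xstar,r}\subseteq\mcS$ is forward invariant by strong contraction). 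Requiring the right-hand side to be at most $\rho r$ and solving for the smallest such time yields $t_\rho = \ln(\rho^{-1})/c$.

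For item~\ref{fact:contraction-time-mixed}, I would pick $x_0 \in \ball{\beta}{\xstar, r}$ and translate the initial ball from the $\beta$-norm to the $\alpha$-norm using Lemma~\ref{lem:ineq_balls_inclusions}, obtaining $\ball{\beta}{\xstar, r}\subseteq \ball{\alpha}{\xstar, \kba\, r}$. Strong contraction in $\norm{\cdot}_\alpha$ then implies
\[
\norm{\odeflowtx{t}{x_0}-\xstar}_\alpha \le \e^{-ct}\,\kba\, r.
\]
Converting back to the $\beta$-norm via $\norm{\cdot}_\beta \le \kab \norm{\cdot}_\alpha$ produces
\[
\norm{\odeflowtx{t}{x_0}-\xstar}_\beta \le \kab\kba\, \e^{-ct}\, r = \ratioab\, \e^{-ct}\, r,
\]
and demanding the right-hand side to be at most $\rho r$ gives $t_\rho^{\alpha,\beta} = \ln(\ratioab\,\rho^{-1})/c$.

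The whole argument is essentially bookkeeping, so I do not expect a substantive obstacle; the only delicate point is making sure that the equivalence constants are applied on the correct side of the contraction estimate (one factor of $\kba$ to enlarge the initial $\beta$-ball into an $\alpha$-ball, and one factor of $\kab$ to shrink the final $\alpha$-estimate back into a $\beta$-estimate), and that invoking Assumption~\ref{ass:local-strong_contractivity} is legitimate for all $t\ge 0$ because the forward-invariance of $\mcS$ keeps the trajectory inside the set where the strong contraction rate $c$ applies.
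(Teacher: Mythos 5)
Your proof is correct and follows essentially the same route as the paper's: item~\ref{fact:contraction-time} by the direct contraction estimate against the constant trajectory at $\xstar$, and item~\ref{fact:contraction-time-mixed} by sandwiching the $\beta$-ball between $\alpha$-balls via Lemma~\ref{lem:ineq_balls_inclusions} and composing the two norm conversions. The only quibble is that, relative to the paper's convention ($\norm{x}_\alpha \le \kab \norm{x}_\beta$ and $\norm{x}_\beta \le \kba \norm{x}_\alpha$), you have swapped the roles of $\kab$ and $\kba$ in the two conversion steps; since only the product $\ratioab = \kab\kba$ enters the final bound, this is harmless.
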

\begin{proof}
Consider a trajectory $x(t)$ of system~\eqref{eq:dynamical_system} such that $\norm{x_0}_{\alpha}\leq{r}$. To prove~\ref{fact:contraction-time} we need to find the first time $t_\rho$ such that $\norm{x(t_\rho) - \xstar}_{\alpha}\leq{\rho r}$.
Clearly the worst-case time is achieved when $\norm{x_0 - \xstar}_{\alpha}={r}$. But $c$-strongly infinitesimal contractivity with respect to $\norm{\cdot}_{\alpha}$ implies $\norm{x(t) - \xstar}_{\alpha}\leq\e^{-ct}\norm{x_0 - \xstar}_{\alpha}$ and so $t_\rho$ is determined by the equality $\e^{-ct_\rho}r=\rho r$, from which item~\ref{fact:contraction-time} follows.

Regarding item~\ref{fact:contraction-time-mixed}, we need to find the first time $t_\rho^{\alpha,\beta}$ such that $\norm{x(t_\rho) - \xstar}_{\beta}\leq{\rho r}$. We note that
\begin{align*}
x_0\in \ball{\beta}{\xstar, r} \quad & \overset{\eqref{ineq:balls:inclusions}, \text{ $\supscr{2}{nd}$ inequality}}{\implies}\quad x_0\in \ball{\alpha}{\xstar, \kab r}, \\
x(t_\rho)\in \ball{\beta}{\xstar, \rho r} \quad & \overset{\eqref{ineq:balls:inclusions}, \text{ $\supscr{1}{st}$ inequality}}{\impliedby}\quad x(t_\rho)\in \ball{\alpha}{\xstar, \rho r/\kba}.
\end{align*}
Thus, the contraction time from $\ball{\beta}{\xstar, r}$ to $\ball{\beta}{\xstar, \rho r}$ is upper bounded by the contraction time from $\ball{\alpha}{\xstar, \kab r}$ to $\ball{\alpha}{\xstar, \rho r/\kba}$. Therefore, the contraction factor with respect to the $\norm{\cdot}_{\alpha}$ norm is $(\rho r/\kba) / (\kab r) = \rho / \ratioab$. Item~\ref{fact:contraction-time-mixed} then follows from item~\ref{fact:contraction-time}.
\end{proof}

\bibliographystyle{plainurl+isbn}
\bibliography{alias, Main, FB}
\end{document}